\newtheorem{propo}{{\bf Proposition}}[section]
\newtheorem{coro}[propo]{{\bf Corollary}}
\newtheorem{lemma}[propo]{{\bf Lemma}} \newtheorem{theor}[propo]{{\bf
Theorem}} 
\newenvironment{proof}{{\bf Proof.}}{$\Box$}
\def\N{{\mathbb N}}
\begin{document}

\vspace*{1.0in}

\begin{center} LEIBNIZ $A$-ALGEBRAS
\end{center}
\bigskip

\begin{center} DAVID A. TOWERS 
\end{center}
\bigskip

\begin{center} Department of Mathematics and Statistics

Lancaster University

Lancaster LA1 4YF

England

d.towers@lancaster.ac.uk 
\end{center}
\bigskip

\begin{abstract}
A finite-dimensional Lie algebra is called an A-algebra if all of its nilpotent subalgebras are abelian. These arise in the study of constant Yang-Mills potentials and have also been particularly important in relation to the problem of describing residually finite varieties. They have been studied by several authors, including Bakhturin, Dallmer, Drensky, Sheina, Premet, Semenov, Towers and Varea. In this paper we establish generalisations of many of these results to Leibniz algebras.
\par 
\noindent {\em Mathematics Subject Classification 2000}: 17B05, 17B20, 17B30, 17B50.
\par
\noindent {\em Key Words and Phrases}: Lie algebras, Leibniz algebras, $A$-algebras, Frattini ideal, solvable, nilpotent, completely solvable, metabelian, monolithic, cyclic Leibniz algebras. 
\end{abstract}

\section{Introduction}
\medskip

An algebra $L$ over a field $F$ is called a {\em Leibniz algebra} if, for every $x,y,z \in L$, we have
\[  [x,[y,z]]=[[x,y],z]-[[x,z],y]
\]
In other words the right multiplication operator $R_x : L \rightarrow L : y\mapsto [y,x]$ is a derivation of $L$. As a result such algebras are sometimes called {\it right} Leibniz algebras, and there is a corresponding notion of {\it left} Leibniz algebra. Every Lie algebra is a Leibniz algebra and every Leibniz algebra satisfying $[x,x]=0$ for every element is a Lie algebra. They were introduced in 1965 by Bloh (\cite{bloh}) who called them $D$-algebras, though they attracted more widespread interest, and acquired their current name, through work by Loday and Pirashvili ({\cite{loday1}, \cite{loday2}).
\par

The {\it Leibniz kernel} is the set Leib$(L)=$ span$\{x^2:x\in L\}$. Then Leib$(L)$ is the smallest ideal of $L$ such that $L/$Leib$(L)$ is a Lie algebra.
Also $[L,$Leib$(L)]=0$.
\par

We define the following series:
\[ L^1=L,L^{k+1}=[L^k,L] \hbox{ and } L^{(0)}=L,L^{(k+1)}=[L^{(k)},L^{(k)}] \hbox{ for all } k=2,3, \ldots
\]
Then $L$ is {\em nilpotent of class n} (resp. {\em solvable of derived length n}) if $L^{n+1}=0$ but $L^n\neq 0$ (resp.$ L^{(n)}=0$ but $L^{(n-1)}\neq 0$) for some $n \in \N$. It is straightforward to check that $L$ is nilpotent of class n precisely when every product of $n+1$ elements of $L$ is zero, but some product of $n$ elements is non-zero.The {\em nilradical}, $N(L)$, (resp. {\em radical}, $R(L)$) is the largest nilpotent (resp. solvable) ideal of $L$.
\par

A Lie algebra $L$ is called an $A$-algebra if all of its nilpotent subalgebras are abelian. This is analogous to the concept of an $A$-group: a finite group with the property that all of its Sylow subgroups are abelian.  They have been studied and used by a number of authors, including Bakhturin and Semenov \cite{bs}, Dallmer \cite{dall}, Drensky \cite{dren}, Sheina \cite{sh1} and \cite{sh2}, Premet and Semenov \cite{ps}, Semenov \cite{sem} and Towers and Varea \cite{tv1}, \cite{tv2}.
They arise in the study of constant Yang-Mills potentials and have also been particularly important in relation to the problem of describing residually finite varieties (see \cite{bs}, \cite{sh1}, \cite{sh2}, \cite{sem} and \cite{ps}). 
\par

It would seem to be worthwhile examining this same concept for Leibniz algebras, both because there has been much interest in seeing which properties of Lie algebras generalise to Leibniz algebras, but also because Leibniz algebras can be used to define consistent generalisations of Yang-Mills functionals. 
\par

In section two we consider the non-solvable case. Here we collect together the preliminary results that we need, including the fact that for Leibniz $A$-algebras the derived series coincides with the lower nilpotent series. The main result is an analogue of the structure theorem of Premet and Semenov (\cite{ps}).
\par
Section three contains the basic structure theorems for solvable Leibniz $A$-algebras. First they split over each term in their derived series. This leads to a decomposition of $L$ as $L = A_{n} \dot{+} A_{n-1} \dot{+} \ldots \dot{+} A_0$ where $A_i$ is an abelian subalgebra of $L$ and $L^{(i)} = A_{n} \dot{+} A_{n-1} \dot{+} \ldots \dot{+} A_{i}$ for each $0 \leq i \leq n$. It is shown that the ideals of $L$ relate nicely to this decomposition: if $K$ is an ideal of $L$ then $K = (K \cap A_n) \dot{+} (K \cap A_{n-1}) \dot{+} \ldots \dot{+} (K \cap A_0)$; moreover, if $N$ is the nilradical of $L$, $Z(L^{(i)}) = N \cap A_i$. We also see that the result in Theorem \ref{t:a} (iii)(a) holds when $L$ is solvable without any restrictions on the underlying field. 
\par
The fourth section looks at Leibniz $A$-algebras in which $L^2$ is nilpotent. These are metabelian and so the results of section three simplify. In addition we can locate the position of the maximal nilpotent subalgebras: if $U$ is a maximal nilpotent subalgebra of $L$ then $U = (U \cap L^2) \oplus (U \cap C)$ where $C$ is a Cartan subalgebra of $L$. 
\par
Section five is devoted to Leibniz $A$-algebras having a unique minimal ideal $W$. Again some of the results of sections three and four simplify. In particular, $N = Z_L(W)$, and if $L$ is strongly solvable the maximal nilpotent subalgebras of $L$ are $L^2$ and the Cartan subalgebras of $L$ (that is, the subalgebras that are complementary to $L^2$.) We also give necessary and sufficient conditions for a Leibniz algebra with a unique minimal ideal to be a strongly solvable $A$-algebra.  
\par
In section six we illustrate some of the previous results by examining the subclass of cyclic Leibniz algebras.
\par 
The final section is devoted to generalising a result of Drensky (\cite{dren}). This shows that a solvable Leibniz algebra over an algebraically closed field has derived length at most three.
\par
Throughout $L$ will denote a finite-dimensional algebra over a field $F$. Algebra direct sums will be denoted by $\oplus$, whereas vector space direct sums will be denoted by $\dot{+}$. The {\it centre} of $L$ is $Z(L)=\{z\in L \mid [z,x]=[x,z]=0$ for all $x\in L\}$. If $U$ is a subalgebra of $L$, the {\it centraliser} of $U$ in $L$ is $C_L(U)=\{x\in L \mid [x,U]=[U,x]=0\}$. We say that $L$ is {\it monolithic} with {\it monolith $W$} if $W$ is the unique minimal ideal of $L$. The Frattini ideal of $L$, $\phi(L)$, is the largest ideal of $L$ contained in all maximal subalgebras of $L$; we call $L$ {\it $\phi$-free}
if $\phi(L) = 0$.

\section{The non-solvable case}
First we note that the class of Leibniz $A$-algebras is closed with respect to subalgebras, factor algebras and direct sums. Also that there is always a unique maximal abelian ideal, and it is the nilradical.

\begin{lemma}\label{l:lemm1}
Let $L$ be a Lie $A$-algebra and let $N$ be its nilradical. Then
\begin{itemize}
\item[(i)] $N$ is the unique maximal abelian ideal of $L$;
\item[(ii)] if $B$ and $C$ are abelian ideals of $L$, we have $[B, C] = 0$.
\end{itemize}
\end{lemma}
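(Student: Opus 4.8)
The plan is to exploit the two inputs that the statement hands us: the defining property of an $A$-algebra, that every nilpotent subalgebra is abelian, and the characterisation of $N$ as the \emph{largest} nilpotent ideal of $L$.

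For part (i), I would begin with the observation that $N$ is itself a nilpotent ideal, hence in particular a nilpotent subalgebra of $L$; since $L$ is an $A$-algebra, $N$ is therefore forced to be abelian. Next I would note the (trivial) converse-direction fact that any abelian ideal $I$ of $L$ satisfies $[I,I]=0$ and so is nilpotent of class at most one; consequently, by the defining property of the nilradical, $I\subseteq N$. Putting the two observations together, $N$ is an abelian ideal of $L$ that contains every abelian ideal of $L$, and this is exactly the assertion that $N$ is the unique maximal abelian ideal.

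For part (ii), no further work is really needed, because (i) already does it: if $B$ and $C$ are abelian ideals then each is contained in $N$ by the argument just given, and since $N$ is abelian we obtain $[B,C]\subseteq[N,N]=0$. (If one preferred an argument not routed through (i), one could instead note that $B+C$ is a sum of nilpotent ideals, hence a nilpotent ideal, hence abelian as an $A$-algebra nilpotent subalgebra, and read off $[B,C]=0$ from $[B+C,B+C]=0$; but the route via (i) is shorter.)

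I do not expect a genuine obstacle here. The only point that deserves care is the implicit fact that the nilradical absorbs every nilpotent ideal — equivalently, that the sum of two nilpotent ideals is again nilpotent — which is precisely what makes ``the largest nilpotent ideal'' well defined; I would take this as given from the standard theory recalled in the preliminaries. Granting that, the entire lemma collapses to the single decisive line that a nilpotent ideal of an $A$-algebra must be abelian.
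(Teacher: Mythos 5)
Your proposal is correct and follows essentially the same route as the paper: $N$ is nilpotent hence abelian by the $A$-algebra property and contains every abelian ideal (each being nilpotent), and (ii) follows immediately from $B,C\subseteq N$ with $[N,N]=0$. The paper's proof is just a terser statement of exactly this argument.
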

\begin{proof} (i) Clearly $N$ is abelian and contains every abelian ideal of $L$.
\par

(ii) Simply note that $B,C\subseteq N$.
\end{proof}

\begin{lemma}\label{l:fac} If $L$ is a Leibniz $A$-algebra over any field and $B$ is an ideal of $L$, then $L/B$ is a Leibniz $A$-algebra.
\end{lemma}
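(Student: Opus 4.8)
The plan is to establish that every nilpotent subalgebra of $L/B$ is abelian by lifting it, through a Cartan subalgebra, to a genuine nilpotent subalgebra of $L$ and then invoking the $A$-algebra hypothesis on $L$. Every subalgebra of $L/B$ has the form $M/B$ for some subalgebra $M$ of $L$ with $B \subseteq M$, so I would fix such an $M$ for which $M/B$ is nilpotent and aim to prove $[M,M] \subseteq B$; this is exactly the assertion that $M/B$ is abelian, and handling an arbitrary such $M$ suffices.

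First I would choose a Cartan subalgebra $C$ of $M$. The fact that finite-dimensional Leibniz algebras possess Cartan subalgebras over \emph{any} field is what legitimises the ``over any field'' clause, so I would flag this as the point where the field-generality really enters. The central structural step is that the image of a Cartan subalgebra under the canonical epimorphism $\pi\colon M \to M/B$ is again a Cartan subalgebra, so that $(C+B)/B = \pi(C)$ is a Cartan subalgebra of $M/B$. Since $M/B$ is nilpotent it is its own unique Cartan subalgebra, which forces $C + B = M$.

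Now $C$, being a Cartan subalgebra, is nilpotent, and as a subalgebra of the $A$-algebra $L$ it must therefore be abelian, i.e.\ $[C,C] = 0$. Using that $B$ is an ideal of $L$, I would then compute $[M,M] = [C+B,\,C+B] \subseteq [C,C] + [C,B] + [B,C] + [B,B] \subseteq B$, and hence $[M/B, M/B] = ([M,M]+B)/B = 0$, so $M/B$ is abelian. As $M/B$ was an arbitrary nilpotent subalgebra of $L/B$, this yields that $L/B$ is a Leibniz $A$-algebra.

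The final computation and the correspondence of subalgebras are routine. The main obstacle is the two facts about Cartan subalgebras in the Leibniz setting: their existence over an arbitrary (possibly finite) field, and the preservation of the defining nilpotency-plus-self-idealising conditions under epimorphisms, which is what guarantees that $\pi(C)$ is again a Cartan subalgebra. If these Leibniz analogues have not been recorded earlier, I would either cite them or insert a short verification; everything else follows formally.
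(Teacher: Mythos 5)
Your overall strategy --- lift the nilpotent subalgebra $M/B$ to a nilpotent subalgebra of $L$ supplementing $B$ in $M$, apply the $A$-algebra hypothesis to that lift, and push abelianness back down via $[M,M]=[C+B,C+B]\subseteq B$ --- is the right idea, and that final computation is fine. The genuine gap is the first of the two ``obstacles'' you flag: the existence of Cartan subalgebras. The lemma is stated over \emph{any} field and for an \emph{arbitrary} ideal $B$, so $M$ (which contains $B$) need not be solvable; and for non-solvable finite-dimensional Lie or Leibniz algebras over a \emph{finite} field the existence of Cartan subalgebras is not a known theorem --- it is a long-standing open problem, not something you can cite or patch in with a short verification. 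The regular-element construction requires an infinite field, and Barnes' existence theorem (which this paper itself is careful to invoke only for solvable algebras, in the proof of Theorem \ref{t:split}) covers only the solvable case. So, as written, your argument proves the lemma over infinite fields, or for solvable $L$, but not in the stated generality. The second obstacle you mention (epimorphic images of Cartan subalgebras are Cartan subalgebras, in the Leibniz setting) is comparatively minor but would also need a Leibniz-specific reference or proof.

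The paper manufactures your nilpotent lift by a more elementary Frattini argument that works over every field. Writing $U$ for your $M$: if $B\subseteq\phi(U)$, then nilpotency of $U/B$ already forces $U$ to be nilpotent by Barnes' theorem for Leibniz algebras \cite{barnes}, hence abelian. Otherwise one chooses a subalgebra $C$ of $L$ \emph{minimal} with respect to $U=B+C$. Minimality forces $B\cap C\subseteq\phi(C)$ (a proper supplement of $B\cap C$ in $C$ would give a smaller supplement of $B$ in $U$), and $B\cap C$ is an ideal of $C$ because $B$ is an ideal of $L$; since $C/(B\cap C)\cong U/B$ is nilpotent, Barnes' theorem again gives that $C$ is nilpotent, hence abelian, and therefore $U/B\cong C/(B\cap C)$ is abelian. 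This minimal supplement $C$ plays exactly the role you wanted your Cartan subalgebra to play, but its existence is immediate; replacing the Cartan subalgebra by it repairs your proof and removes all restrictions on the field.
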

\begin{proof} Let $U$ be a subalgebra of $L$ such that $U/B$ is nilpotent. If $B\subseteq \phi(U)$ the $U$ is nilpotent (see \cite{barnes}) and hence abelian. 
\par

So suppose that $B \not \subseteq \phi(U)$. Then there is a maximal subalgebra $M$ of $U$ such that $U=B+M$. Choose $C$ to be a subalgebra of $L$ which is minimal with respect to $U=B+C$. Then $B\cap C\subseteq \phi(C)$ and $U/B \cong C/B\cap C$. It follows, by \cite{barnes} again, that $C$ is nilpotent and hence abelian.
\par

So, in either case, $U/B$ is abelian and $L/B$ is an $A$-algebra.  
\end{proof}

\begin{lemma}\label{l:lemm2} Let $B$, $C$ be ideals of the Leibniz algebra $L$.
\begin{itemize}
\item[(i)] If $L/B$, $L/C$ are $A$-algebras, then $L/(B \cap C)$ is an $A$-algebra.
\item[(ii)] If $L = B \oplus C$, where $B, C$ are $A$-algebras, then $L$ is an $A$-algebra.
\end{itemize}
\end{lemma}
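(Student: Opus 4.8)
The plan is to prove part (ii) first, since part (i) will then follow from it by a standard subdirect-product embedding. For (ii), let $U$ be a nilpotent subalgebra of $L = B \oplus C$ and consider the two projection maps $\pi_B : L \to B$ and $\pi_C : L \to C$ afforded by the decomposition. Because $B$ and $C$ are ideals with $[B,C] = 0$, each projection is an algebra homomorphism, so $\pi_B(U)$ and $\pi_C(U)$ are homomorphic images of the nilpotent algebra $U$ and hence are nilpotent subalgebras of $B$ and $C$ respectively. Since $B$ and $C$ are $A$-algebras, both $\pi_B(U)$ and $\pi_C(U)$ are abelian.

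Next I would note that every $u \in U$ satisfies $u = \pi_B(u) + \pi_C(u)$, so $U \subseteq \pi_B(U) \oplus \pi_C(U)$. Here $[\pi_B(U), \pi_C(U)] \subseteq [B,C] = 0$ and both summands are abelian, so $\pi_B(U) \oplus \pi_C(U)$ is itself abelian. A subspace of an abelian algebra has all products vanishing, so $U$ is abelian. As $U$ was an arbitrary nilpotent subalgebra, this shows $L$ is an $A$-algebra, proving (ii).

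For (i), I would use the diagonal map $\theta : L \to L/B \oplus L/C$ given by $\theta(x) = (x+B,\, x+C)$. This is a homomorphism of Leibniz algebras (it respects the bracket componentwise) with kernel exactly $B \cap C$, so $L/(B \cap C) \cong \theta(L)$, a subalgebra of $L/B \oplus L/C$. By hypothesis $L/B$ and $L/C$ are $A$-algebras, so by part (ii) their direct sum $L/B \oplus L/C$ is an $A$-algebra; since a subalgebra of an $A$-algebra is again an $A$-algebra (any nilpotent subalgebra of the subalgebra is a nilpotent subalgebra of the whole), we conclude that $L/(B \cap C)$ is an $A$-algebra.

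I do not anticipate a serious obstacle. The only points requiring a little care are verifying that the restrictions of the projections, and the diagonal map $\theta$, are genuinely algebra homomorphisms in the Leibniz setting (immediate once one uses that $B, C$ are ideals with $[B,C]=0$), and the containment $U \subseteq \pi_B(U) \oplus \pi_C(U)$ that reduces abelianness of $U$ to that of its two projections.
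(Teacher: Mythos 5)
Your proof is correct, but it runs in the opposite direction from the paper's. The paper proves (i) directly: if $U/(B\cap C)$ is a nilpotent subalgebra of $L/(B\cap C)$, then $(U+B)/B$ is a nilpotent subalgebra of the $A$-algebra $L/B$ (being a homomorphic image of $U/(B\cap C)$), hence abelian, so $U^2\subseteq B$; symmetrically $U^2\subseteq C$, giving $U^2\subseteq B\cap C$. Part (ii) is then an immediate consequence: when $L=B\oplus C$ one has $L/B\cong C$ and $L/C\cong B$, so $L=L/(B\cap C)$ is an $A$-algebra by (i). You instead prove (ii) directly via the projections $\pi_B,\pi_C$, and recover (i) from it through the diagonal embedding of $L/(B\cap C)$ into $L/B\oplus L/C$ together with closure of the class under subalgebras. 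Both arguments rest on the same mechanism --- push the nilpotent subalgebra into the two factors, where homomorphic images of nilpotent algebras are nilpotent and hence abelian by the $A$-hypothesis, then pull abelianness back --- so neither is deeper than the other. The paper's order is a little more economical: it stays inside $L$, needs no external direct sum or kernel computation, and its deduction of (ii) from (i) is one line. Your order makes (ii) the transparent computational step and (i) the formal consequence; the price is the two extra (easy) ingredients you correctly flag: that the projections and the diagonal map are Leibniz homomorphisms, and that subalgebras of $A$-algebras are $A$-algebras, a closure property the paper asserts at the start of Section 2 without proof.
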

\begin{proof} (i) Let $U/(B \cap C)$ be a nilpotent subalgebra of $L/(B \cap C)$. Then $(U + B)/B$ is a nilpotent subalgebra of $L/B$, which is an $A$-algebra. It follows that $U^2 \subseteq B$. Similarly, $U^2 \subseteq C$, whence the result.
\par
(ii) This follows from (i).
\end{proof}
\medskip

We define the {\em nilpotent residual}, $\gamma_{\infty}(L)$, of $L$ be the smallest ideal of $L$ such that $L/\gamma_{\infty}(L)$ is nilpotent. Clearly this is the intersection of the terms of the lower central series for $L$. Then the {\em lower nilpotent series} for $L$ is the sequence of ideals $N_i(L)$ of $L$ defined by $N_0(L) = L$, $N_{i+1}(L) = \gamma_{\infty}(N_i(L))$ for $i \geq 0$. 
\par
For Leibniz $A$-algebras we have the following result.

\begin{lemma}\label{l:series}
Let $L$ be a Leibniz $A$-algebra. Then the lower nilpotent series coincides with the derived series.
\end{lemma}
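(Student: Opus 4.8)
The plan is to reduce everything to the single identity $\gamma_{\infty}(M) = M^2$ for an arbitrary Leibniz $A$-algebra $M$, and then feed this into an induction running along the two series simultaneously. Writing $N_i = N_i(L)$, I would first record the inclusion $N_i \subseteq L^{(i)}$, which holds for any Leibniz algebra and needs no hypothesis: since $M/M^2$ is abelian and hence nilpotent, minimality of the nilpotent residual gives $\gamma_{\infty}(M) \subseteq M^2$ in general, and then an easy induction via $N_{i+1} = \gamma_{\infty}(N_i) \subseteq (N_i)^2 \subseteq [L^{(i)},L^{(i)}] = L^{(i+1)}$ yields $N_i \subseteq L^{(i)}$ for all $i$. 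The real content is the reverse inclusion, and this is precisely where the $A$-algebra hypothesis must be used.

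For the key identity, let $M$ be any Leibniz $A$-algebra. The quotient $M/\gamma_{\infty}(M)$ is nilpotent by the very definition of the nilpotent residual, and it is again an $A$-algebra by Lemma \ref{l:fac}. But a nilpotent $A$-algebra is a nilpotent subalgebra of itself, so it must be abelian; hence $M/\gamma_{\infty}(M)$ is abelian, which forces $M^2 = [M,M] \subseteq \gamma_{\infty}(M)$. Combining this with the inclusion $\gamma_{\infty}(M) \subseteq M^2$ noted above gives $\gamma_{\infty}(M) = M^2$.

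It then remains to run the induction $N_i = L^{(i)}$. The base case $N_0 = L = L^{(0)}$ is immediate. Assuming $N_i = L^{(i)}$, I note that $L^{(i)}$ is an ideal, and in particular a subalgebra, of $L$, so it is itself a Leibniz $A$-algebra by closure of the class under subalgebras; applying the key identity to $M = L^{(i)}$ then gives $N_{i+1} = \gamma_{\infty}(N_i) = \gamma_{\infty}(L^{(i)}) = (L^{(i)})^2 = L^{(i+1)}$, which completes the step. I do not anticipate a genuine obstacle: once the closure properties are in hand the argument is short, and the only points needing care are verifying both inclusions in $\gamma_{\infty}(M) = M^2$ and confirming that the inductive hypothesis legitimately exhibits $L^{(i)}$ as a subalgebra of $L$, so that the $A$-algebra property — and hence the identity — is available at each stage.
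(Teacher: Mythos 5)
Your proof is correct and follows essentially the same route as the paper: both establish $\gamma_{\infty}(M)\subseteq M^2$ from nilpotency of $M/M^2$, and the reverse inclusion from the fact that $M/\gamma_{\infty}(M)$ is a nilpotent $A$-algebra (hence abelian), then iterate along the series. Your write-up merely makes explicit the induction and the citations (Lemma \ref{l:fac} and closure under subalgebras) that the paper compresses into ``repetition of this argument.''
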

\medskip
{\it Proof.} Since $L/L^{(1)}$ is nilpotent we have $N_1(L) \subseteq L^{(1)}$. Also $L/N_1(L)$ is nilpotent and hence abelian, by Lemma \ref{l:lemm1} (ii), so $L^{(1)} \subseteq N_1(L)$. Repetition of this argument gives $N_i(L) = L^{(i)}$ for each $i \geq 0$.
\bigskip

If $F$ has characteristic zero, then every solvable Leibniz $A$-algebra over $F$ is metabelian, since $L^2$ is nilpotent. This is not the case, however, when $F$ is any field of characteristic $p > 0$ (see \cite[Example 2.1]{tow}).
\par

A main problem encountered when trying to generalise results about Lie algebras to the case of Leibniz algebras is the lack of anti-symmetry, so that one-sided ideals exist in a Leibniz algebra. The following lemma is used several times in this paper to overcome this difficulty.

\begin{lemma}\label{l:leftideal} Let $A$ be an abelian ideal of a Leibniz algebra $L$ and suppose that $x^2\in A$. Then $L_x^n(A)\subseteq R_x^{n-1}(A)$ for all $n\geq 1$.
\end{lemma}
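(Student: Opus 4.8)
My plan is to prove the sharper operator identity
\[ L_x^n = (-1)^{n-1} R_x^{n-1} L_x \qquad \text{on } A, \]
for every $n \geq 1$, from which the stated inclusion follows at once: since $A$ is an ideal, both $L_x$ and $R_x$ map $A$ into $A$, so $L_x^n(A) = (-1)^{n-1} R_x^{n-1}\bigl(L_x(A)\bigr) \subseteq R_x^{n-1}(A)$. Thus the whole lemma reduces to proving this identity of endomorphisms of $A$.

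First I would record two relations describing how $L_x$ and $R_x$ act on $A$, and here is where the hypotheses are genuinely used. Because $x^2 \in A$ and $A$ is abelian, we have $[x^2, a] = 0$ for every $a \in A$. Applying the Leibniz identity to $[x,[x,a]]$ (with $y=x$, $z=a$) gives
\[ [x,[x,a]] = [[x,x],a] - [[x,a],x] = [x^2,a] - [[x,a],x], \]
and the first term vanishes, so $L_x^2 = -R_x L_x$ on $A$. Applying the identity instead to $[x,[b,x]]$ for $b \in A$ (with $y=b$, $z=x$) gives
\[ [x,[b,x]] = [[x,b],x] - [[x,x],b] = [[x,b],x] - [x^2,b] = [[x,b],x], \]
that is, $L_x R_x = R_x L_x$ on $A$. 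So the two multiplication operators commute on $A$.

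With these two facts the induction is routine. The cases $n=1$ and $n=2$ are, respectively, the trivial identity and the first relation above. For the inductive step, assuming $L_x^n = (-1)^{n-1} R_x^{n-1} L_x$ on $A$, I compose on the left with $L_x$, push $L_x$ through $R_x^{n-1}$ using the commuting relation, and then replace the resulting $L_x^2$ by $-R_x L_x$; this produces $L_x^{n+1} = (-1)^{n} R_x^{n} L_x$ on $A$, which closes the induction.

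The only real subtlety lies in the two base relations: one must expand the correct double bracket via the Leibniz identity and then notice that the hypothesis $x^2 \in A$, together with $A$ being abelian, is exactly what annihilates the unwanted term $[x^2,\cdot]$ in each expansion. Everything afterwards is bookkeeping with two commuting operators that both preserve $A$, so I expect no further obstacle.
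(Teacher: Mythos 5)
Your proof is correct. It shares its engine with the paper's proof --- both rest on expanding $[x,[x,a]]$ by the Leibniz identity and using $x^2\in A$ together with $[A,A]=0$ to kill the term $[x^2,a]$, which is exactly your relation $L_x^2=-R_xL_x$ on $A$ --- but your overall route is different and yields more: you establish the exact signed operator identity $L_x^n=(-1)^{n-1}R_x^{n-1}L_x$ on $A$, whereas the paper works only with inclusions of subspaces, proving $L_x^{k+1}(A)=L_x^2\bigl(L_x^{k-1}(A)\bigr)\subseteq R_x\bigl(L_x^k(A)\bigr)\subseteq R_x^k(A)$ directly by induction (signs are irrelevant there, since subspaces absorb scalars). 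The price of your sharper statement is the auxiliary commutation relation $L_xR_x=R_xL_x$ on $A$, which the paper never needs; moreover you could have dispensed with it yourself by composing your inductive identity with $L_x$ on the \emph{right} instead of the left, since $L_x(A)\subseteq A$ gives $L_x^{n+1}=(-1)^{n-1}R_x^{n-1}L_x^2=(-1)^{n}R_x^{n}L_x$ on $A$ in one step. In short: the paper's argument is the leaner path to the stated inclusion; yours costs one extra (avoidable) lemma but records the precise identity, signs included, which is a genuinely stronger conclusion.
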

\begin{proof} Clearly $[x,A]\subseteq A$ so the result holds for $n=1$. Suppose that it holds for $n\leq k$ where $k\geq 1$. Then
\begin{align}
 L_x^{k+1}(A)& = [x,[x,L_x^{k-1}(A)]]\subseteq [x^2,L_x^{k-1}(A)]+[[x,L_x^{k-1}(A)],x] \nonumber \\
&= [L_x^k(A),x]\subseteq R_x^k(A). \nonumber
\end{align} The result follows by induction.
\end{proof}
\medskip

Finally in this section we generalise a structure theorem of Premet and Semenov (see \cite{ps}) to Leibniz algebras. We will need the following easy lemma.

\begin{lemma}\label{l:3dsplit} Let $L$ be a Leibniz algebra over a field of characteristic different from $2$ such that $L/Z(L)$ is a simple three-dimensional Lie algebra. Then $L= L^2\dot{+} Z(L)$.
\end{lemma}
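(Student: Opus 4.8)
The plan is to write $Z=Z(L)$ and $S=L/Z$, and to reduce the whole problem to a single equality of subspaces. Since $S$ is simple it is perfect, so $[S,S]=S$; pulling this back to $L$ gives $L^2+Z=L$. Hence it suffices to prove $L^2\cap Z=0$, for then $L=L^2\dot{+}Z$. Now $Z$, the derived algebra $L^2=[L,L]$, and their intersection are all unaffected by extension of the ground field: the centre is cut out by linear equations, $L^2$ is the image of the bracket map, intersections of subspaces commute with field extension, and $K\otimes_F Z(L)=Z(K\otimes_F L)$ with $(K\otimes_F L)/Z(K\otimes_F L)\cong K\otimes_F S$ for any extension $K/F$. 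Choosing $K$ to split $S$, I may therefore assume $S$ is split and fix a standard triple $e,f,h$ with $[h,e]=2e$, $[h,f]=-2f$, $[e,f]=h$; note that $\mathrm{char}\,F\neq 2$ forces the three eigenvalues $0,2,-2$ of $\mathrm{ad}\,h$ to be distinct, which is the arithmetic fact I will ultimately exploit.

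Next I would use a lift of $h$ to split $L$ by the derivation $R_{\tilde h}$. Fix a lift $\tilde h\in L$ of $h$. Because right multiplication is a derivation, the generalized eigenspaces of $R_{\tilde h}$ satisfy $[L_\lambda,L_\mu]\subseteq L_{\lambda+\mu}$. Since $R_{\tilde h}$ kills $Z$ and induces on $L/Z$ the map $y+Z\mapsto[y,h]+Z$, with eigenvalues $0,2,-2$, the characteristic polynomial of $R_{\tilde h}$ on $L$ is $t^{\dim Z+1}(t-2)(t+2)$. Thus $L=L_0\oplus L_2\oplus L_{-2}$ with $\dim L_2=\dim L_{-2}=1$ and $L_0=F\tilde h\oplus Z$. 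In particular $L_2=Fu$ and $L_{-2}=Fv$ for genuine eigenvectors $[u,\tilde h]=2u$, $[v,\tilde h]=-2v$ reducing to $f,e$ respectively; adjusting by a central element I may take $\tilde h=[v,u]$, which lies in $L_0$ and reduces to $h$.

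I would then read off $L^2$ from the grading. The products $[u,u],[v,v]$ lie in $L_4=L_{-4}=0$, all products involving $Z$ vanish, and from the reductions $[\tilde h,u]=-2u$, $[\tilde h,v]=2v$, $[u,v]=-\tilde h+(\text{central})$, $[v,u]=\tilde h$. Collecting terms gives $L^2=F\tilde h+Fu+Fv+(L^2\cap Z)$, and since $\tilde h,u,v$ are independent modulo $Z$, the space $L^2\cap Z$ is spanned by the only central products that can arise, namely $[\tilde h,\tilde h]$ and $z:=[u,v]+[v,u]$. It remains to show both vanish. For the first, the derivation property yields $[\tilde h,\tilde h]=R_{\tilde h}([v,u])=[[v,\tilde h],u]+[v,[u,\tilde h]]=[-2v,u]+[v,2u]=0$.

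The decisive step, which I expect to carry the real content, is killing $z$, and this is exactly where $\mathrm{char}\,F\neq 2$ enters. I apply the Leibniz identity to the triple $\tilde h,u,v$: $[\tilde h,[u,v]]=[[\tilde h,u],v]-[[\tilde h,v],u]$. The right-hand side is $[-2u,v]-[2v,u]=-2([u,v]+[v,u])=-2z$, while the left-hand side is $[\tilde h,-\tilde h+(\text{central})]=-[\tilde h,\tilde h]=0$, using $[\tilde h,\tilde h]=0$ and the centrality of $Z$. Hence $2z=0$, so $z=0$. Therefore $L^2\cap Z=0$, and together with $L^2+Z=L$ this gives $L=L^2\dot{+}Z$. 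The two obstacles to manage are the reduction to the split form of $S$ (so that a toral lift $\tilde h$ and its eigenspace decomposition are available over the base field) and the bookkeeping showing scalar extension disturbs none of $Z$, $L^2$, or $L^2\cap Z$; the final Leibniz-identity computation is short but is the crux of the argument.
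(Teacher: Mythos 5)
Your proposal is correct in substance, but it follows a genuinely different route from the paper's. The paper never extends scalars: it quotes Jacobson's classification of three-dimensional simple Lie algebras over a field of characteristic $\neq 2$ to get a basis $e_1,e_2,e_3$ of $L$ modulo $Z(L)$ with $[e_2,e_3]\equiv e_1$, $[e_3,e_1]\equiv\alpha e_2$, $[e_1,e_2]\equiv\beta e_3$ modulo $Z(L)$, and then observes that the subspace $S$ of $L$ spanned by the three actual products $[e_1,e_2]$, $[e_3,e_1]$, $[e_3,e_2]$ is a three-dimensional simple subalgebra of $L$; simplicity forces $S\cap Z(L)=0$, while $L=S+Z(L)$ forces $L^2=S^2=S$, giving the splitting at once. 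You instead reduce to the split form $sl(2)$ over an extension field (using that $Z(L)$, $L^2$ and $L^2\cap Z(L)$ all commute with base change), form the primary decomposition $L=L_0\oplus L_2\oplus L_{-2}$ relative to $R_{\tilde h}$, and kill the two potential central obstructions $[\tilde h,\tilde h]$ and $z=[u,v]+[v,u]$ by explicit Leibniz-identity computations. The paper's route buys brevity and avoids descent bookkeeping and the splitting theory of forms of $sl(2)$; your route buys transparency, since the paper's ``it is easy to see'' (that the lifted products close up into a simple subalgebra) is replaced by eigenvalue arguments, and the role of characteristic $\neq 2$ is isolated in the single equation $-2z=0$.

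One slip needs fixing: your claim that $[u,u]$ and $[v,v]$ lie in $L_4=L_{-4}=0$ fails in characteristic $3$, which the lemma allows. There $4\equiv -2$ and $-4\equiv 2$, so $L_4=L_{-2}=Fv$ and $L_{-4}=L_2=Fu$. The slip is harmless for two independent reasons: these products then lie in $Fv$ and $Fu$, which are already in your spanning set for $L^2$ and contribute nothing to $L^2\cap Z(L)$; and in any case $[u,u],[v,v]\in{\rm Leib}(L)\subseteq Z(L)\subseteq L_0$ (because $L/Z(L)$ is a Lie algebra, so all squares are central), and $L_0\cap L_{\pm2}=0$ forces both to vanish in every characteristic $\neq 2$. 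With that one sentence added, your argument is complete.
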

\begin{proof} By \cite[page 13]{jac}, $L/Z(L)$ has a basis $e_1+Z(L),e_2+Z(L),e_3+Z(L)$ with products $[e_2,e_3]+Z(L)=e_1+Z(L)$,$[e_3,e_1]+Z(L)=\alpha e_2+Z(L)$, $[e_1,e_2]+Z(L)=\beta e_3+Z(L)$ for some $\alpha,\beta \in F\setminus{0}$. Then it is easy to see that the subspace $S$ spanned by $[e_1,e_2]$,$[e_3,e_1]$,$[e_3,e_2]$ is a three dimensional simple subalgebra of $L$. It follows that $Z(L)\cap S=0$ and $S=L^2$. Hence $L=L^2\dot{+}Z(L)$.
\end{proof}
\medskip

If $K$ is an extension field of $F$, denote $K\otimes_F L$ by $L_K$.

\begin{theor}\label{t:a}
Let $L$ be a Leibniz $A$-algebra over a field $F$. If $F$ has characteristic $\neq 2, 3$ and cohomological dimension $\leq 1$ (this means that the Brauer group of any algebraic extension of the underlying field is trivial), then
\begin{itemize}
\item[(i)] $L^2 \cap Z(L) = 0$; and
\item[(ii)] $L$ has a Levi decomposition and every Levi subalgebra is representable as a direct sum of simple ideals, each one of which splits over some finite extension of the ground field into a direct sum of ideals isomorphic to $sl(2)$.
\end{itemize}
\end{theor}
\begin{proof}
\begin{itemize}
\item[(i)] Let $L$ be a minimal counter-example, so there is a non-zero element $x\in Z(L)\cap L^2$. Clearly Leib$(L) \neq 0$ by \cite[Proposition 2]{ps}.  Let $A$ be a subspace complementary to $Fx$ in $Z(L)$, so $Z(L)=Fx\dot{+}A$. Then 
$$x+A\in \frac{Z(L)}{A}\cap \frac{L^2+A}{A}\subseteq Z\left(\frac{L}{A}\right)\cap \left(\frac{L}{A}\right)^2,$$ 
so we have that $A=0$ and $\dim Z(L)=1$. If $B$ is a non-trivial ideal of $L$ we have $Z(L)\subseteq B$, since otherwise $L/B$ would be a counter-example of smaller dimension. It follows that $L$ is monolithic with monolith $Z(L)$. Let $M$ be a maximal ideal of $L$. Then $M^2\cap Z(M)=0$ and so $Z(L)\not\subseteq M^2$, whence $M^2=0$. But now either $L$ is nilpotent or there is a unique maximal ideal which is abelian and is the radical. If $L$ is nilpotent, it is abelian, which yields a contradiction.
\par

So suppose that $L$ has a unique maximal  ideal $M$ which is abelian and is the radical. Then $L/M={\mathcal L}$ is simple. It follows from \cite[Corollary 1 and Lemma 2]{ps} that ${\mathcal L}$ is a Lie $p$-algebra Moreover, our assumption on the field $F$ implies that${\mathcal L}$ has a non-zero nilpotent element (see \cite{p1} and \cite{p2}). Hence there exists an element $u \in L\setminus M$ such that $R_u^{p^m}(L)\subseteq M$. Let $\overline{u}$ be the image of $u$ under the canonical homomorphism from $L$ to $\mathcal{L}$. The element $\overline{u}^{p^m}$ lies in the centre of the universal enveloping algebra $U({\mathcal L})$, and so in any indecomposable $L$-module $W$ the set $\lambda_1(W), \ldots, \lambda_r(W)$ of eigenvalues of $\overline{u}^{p^m}$ consists of elements of $\overline{F}$ that are conjugate under the Galois group $Gal(\overline{F}/F)$. The right module $M$ is indecomposable and contains $Z(L)$, and so $\lambda_k(M)=0$ for some $1 \leq k \leq r$. It follows that $u$ acts nilpotently on the right in $L$. But now $u^2\in$ Leib$(L)\subseteq M$, so, using Lemma \ref{l:leftideal}, $Fu+M$ is a nilpotent subalgebra of $L$ and thus abelian. This yields that $u\in C_L(M)$, and so $C_L(M)=L$ and $M=Z(L)$.
\par

Now there is a finite extension $K$ of $F$ over which $(L/Z(L))_K$ splits as a direct sum of ideals $S_1/Z(L) \oplus \ldots \oplus S_n/Z(L)$ isomorphic to $sl(2)$, by \cite[Proposition 2(ii)]{ps} again. Let $\theta: L \rightarrow L/Z(L)$ be the canonical homomorphism with $\ker(\theta)=Z(L)$ and let $\theta_K: L_K \rightarrow (L/Z(L))_K$ be the natural extension of $\theta$ to the corresponding algebras over the extension field. Then $\theta_K$ is a surjective homomorphism with $\ker(\theta_K)=(\ker(\theta))_K$ (see, for example, \cite{jac}), so $(L/Z(L))_K\cong L_K/Z(L)_K$. Using Lemma \ref{l:3dsplit} we thus see that $L_K=(L_K)^2 \dot{+}Z(L)_K$. But now $L=L^2\dot{+}Z(L)$, a contradiction from which the result follows.

\item[(ii)] We have that $L/$Leib$(L)=S/$Leib$(L)\dot{+}R/$Leib$(L)$ where $R$ is the radical of $L$ and there is a finite extension $K$ of $F$ over which $S/$Leib$(L)$ splits as a direct sum of ideals $S_1/$Leib$(L) \oplus \ldots \oplus S_n/$Leib$(L)$ isomorphic to $sl(2)$, by \cite[Proposition 2(ii)]{ps}. Arguing as in the final two paragraphs of (i) we have that $L_K=S_K^2\dot{+}R_K$, from which $L=S^2\dot{+}R$ giving the claimed result.
\end{itemize}
\end{proof}

\section{Decomposition results for Solvable Leibniz $A$-algebras}
Here we have the basic structure theorems for solvable Leibniz $A$-algebras. First we see that such an algebra splits over the terms in its derived series.

\begin{lemma}\label{l:nilrad}
Let $L$ be any solvable Leibniz algebra with nilradical $N$. Then $Z_L(N) \subseteq N$ 
\end{lemma}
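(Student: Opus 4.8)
The plan is to prove the sharper statement that $C := Z_L(N)$ is itself a nilpotent ideal of $L$; since $N$ is by definition the largest nilpotent ideal, this yields $Z_L(N) \subseteq N$ at once. The argument has three ingredients, and because Leibniz algebras lack anti-symmetry each must be verified on both sides. The first step is to check that $C$ is a genuine (two-sided) ideal. For $x \in C$, $y \in L$ and $n \in N$ one expands $[n,[x,y]]$ and $[n,[y,x]]$ by the Leibniz identity, and $[[x,y],n]$ and $[[y,x],n]$ by the derivation property of $R_n$. In each of the four expansions every summand carries either a factor $[x,n]=[n,x]=0$ or a factor of the form $[t,x]$ with $t\in N$ (since $N$ is an ideal we have $[n,y],[y,n]\in N$, and $x$ centralises $N$ on both sides). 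Hence $[x,y],[y,x]\in C$ and $C$ is an ideal.

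Next I would record that the derived terms $C^{(i)}$ are ideals of $L$. It suffices to observe that whenever $I$ is an ideal, so is $[I,I]$: for $a,b\in I$ and $z\in L$, the identity $[z,[a,b]]=[[z,a],b]-[[z,b],a]$ together with the derivation identity $[[a,b],z]=[[a,z],b]+[a,[b,z]]$ keeps every term inside $[I,I]$, because $[z,a],[a,z],[z,b],[b,z]$ all lie in $I$. Iterating from $C$ shows that each $C^{(i)}$ is an ideal of $L$.

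For the nilpotency, note that $C$ is a subalgebra of the solvable algebra $L$, hence solvable, so $C^{(d)}=0$ for some $d$ and the set of indices $m$ with $C^{(m)}\subseteq N$ is non-empty. Let $m$ be its least element and suppose $m\geq 1$. Put $H=C^{(m-1)}$. Then $H^2=[H,H]=C^{(m)}\subseteq N$, while $H\subseteq C=Z_L(N)$ gives $[N,H]=0$, so $H^3=[H^2,H]\subseteq[N,H]=0$ and $H$ is nilpotent of class at most $2$. Being an ideal of $L$, $H$ lies in $N$, i.e. $C^{(m-1)}\subseteq N$, contradicting the minimality of $m$. Therefore $m=0$ and $C=C^{(0)}\subseteq N$.

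I expect the main obstacle to be the bookkeeping in the first step: the two-sided centraliser must be shown to be a two-sided ideal, and it is precisely here that the absence of anti-symmetry forces one to handle all four products rather than the single bracket that would suffice in the Lie case. I would also stress that solvability of $L$ is used only to guarantee that the derived series of $C$ terminates, which is what makes the least index $m$ exist; the conclusion genuinely fails without it, as for a simple Lie algebra one has $N=0$ and $Z_L(N)=L$.
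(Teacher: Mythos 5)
Your proof is correct. The engine is the same as the paper's: an ideal $H \subseteq Z_L(N)$ with $H^2 \subseteq N$ satisfies $H^3 \subseteq [N, Z_L(N)] = 0$, hence is a nilpotent ideal of $L$, hence lies in $N$ by maximality of the nilradical — and this forces $Z_L(N) \subseteq N$. Where you differ is in how that ideal is produced and in how much is verified. The paper argues in the quotient $L/(N \cap Z_L(N))$: it picks a non-trivial abelian ideal of that quotient inside the image of $Z_L(N)$ (such an ideal exists because $L$ is solvable and $Z_L(N)$ is an ideal of $L$ — both facts left implicit there) and applies the cube-zero computation to its preimage. You instead stay inside $L$ and run a descent on the derived series of $C = Z_L(N)$, taking $H = C^{(m-1)}$ where $m$ is the least index with $C^{(m)} \subseteq N$; to make this legitimate you prove from scratch precisely the two facts the paper suppresses, namely that $Z_L(N)$ is a two-sided ideal and that $[I,I]$ is an ideal whenever $I$ is, so that every $C^{(i)}$ is an ideal of $L$. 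Your version is longer but self-contained, and the explicit four-bracket verification is exactly the care that the Leibniz setting (no anti-symmetry) demands; the paper's quotient formulation buys brevity at the cost of leaving those checks to the reader. Both arguments rest on the same two pillars — solvability, which makes your descent (or the paper's abelian ideal) exist, and maximality of $N$ — and your closing remark that the statement fails for simple algebras correctly identifies why solvability cannot be dropped.
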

\begin{proof} Suppose that $Z_L(N) \not \subseteq N$. Then there is a non-trivial abelian ideal $A/(N \cap Z_L(N)$ of $L/(N \cap Z_L(N)$ inside $Z_L(N)/(N \cap Z_L(N)$. But now $A^3 \subseteq [N, A] = 0$, so $A$ is a nilpotent ideal of $L$. It follows that $A \subseteq N \cap Z_L(N)$, a contradiction.
\end{proof}

\begin{theor}\label{t:split}
Let $L$ be a solvable Leibniz $A$-algebra. Then $L$ splits over each term in its derived series. Moreover, the Cartan subalgebras of $L^{(i)}/L^{(i+2)}$ are precisely the subalgebras that are complementary to $L^{(i+1)}/L^{(i+2)}$ for $i \geq 0$.
\end{theor}
\begin{proof} Suppose that $L^{(n+1)} = 0$ but $L^{(n)} \neq 0$. First we show that $L$ splits over $L^{(n)}$. Clearly we can assume that $n \geq 1$. Let $C$ be a Cartan subalgebra of $L^{(n-1)}$ (this exists in any solvable Leibniz algebra: the proof is the same as that for Lie algebras in \cite[Corollary 4.4.1.2]{wint}) and let $L = L_0 \dot{+} L_1$ be the Fitting decomposition of $L$ relative to $R_C$. Then $L_1 = \cap_{k=1}^{\infty} R_C^k(L) \subseteq L^{(n)}$, and so $L_1$ is an abelian right ideal of $L$. Also $L^{(n-1)} = L_1 \dot{+} L_0 \cap L^{(n-1)}$ and  $L_0 \cap L^{(n-1)} = (L^{(n-1)})_0 = C$, which is abelian. 
\par

Now 
\[ [C,[L,C]]\subseteq [[C,L],C] +[C^2,L]\subseteq [L,C]
\] Suppose that $[C,R_C^k(L)]\subseteq R_C^k(L)$ for $k\geq 1$. Then
\[ [C,R_C^{k+1}(L)] = [C,[R_C^k(L),C]]\subseteq [[C,R_C^k(L)],C]+[C^2,R_C^k(L)]\subseteq R_C^{k+1}(L).
\]
It follows that $[C,L_1]\subseteq L_1$ and thus that $L_1$ is an ideal of $L^{(n-1)}$. But $L^{(n-1)}/L_1$ is abelian, whence $L^{(n)} \subseteq L_1$ and $L = L_0 \dot{+} L^{(n)}$.
\par
So we have that $L = L^{(n)} \dot{+} B$ where $B = L_0$ is a subalgebra of $L$. Clearly $B^{(n)} = 0$, so, by the above argument, $B$ splits over $B^{(n-1)}$, say $B = B^{(n-1)} \dot{+} D$. But then $L = L^{(n)} \dot{+} (B^{(n-1)} \dot{+} D) = L^{(n-1)} \dot{+} D$. Continuing in this way gives the desired result.
\end{proof}
\medskip

This gives us the following fundamental decomposition result.

\begin{coro}\label{c:decomp}
Let $L$ be a solvable Lie $A$-algebra of derived length $n+1$. Then
\begin{itemize}
\item[(i)] $L = A_{n} \dot{+} A_{n-1} \dot{+} \ldots \dot{+} A_0$ where $A_i$ is an abelian subalgebra of $L$ for each $0 \leq i \leq n$; and
\item[(ii)] $L^{(i)} = A_{n} \dot{+} A_{n-1} \dot{+} \ldots \dot{+} A_{i}$ for each $0 \leq i \leq n$
\end{itemize}
\end{coro}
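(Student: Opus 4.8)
The plan is to obtain the decomposition by iterating the splitting already established in Theorem \ref{t:split}, working down the derived series one term at a time. The crucial preliminary observation is that the class of Leibniz $A$-algebras is closed under taking subalgebras (as recorded at the start of Section 2), so every derived subalgebra $L^{(i)}$ is again a solvable Leibniz $A$-algebra. Moreover $(L^{(i)})^{(j)} = L^{(i+j)}$, so the derived ideal of $L^{(i)}$ is $L^{(i+1)}$ and $L^{(i)}$ has derived length $n+1-i$; in particular its derived series reaches $0$ after finitely many further steps, which is what makes the recursion terminate.

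First I would apply Theorem \ref{t:split} to $L$ itself to split it over its derived ideal, writing $L = L^{(1)} \dot{+} A_0$ for some subalgebra $A_0$. Since $A_0 \cap L^{(1)} = 0$ and $A_0 + L^{(1)} = L$, the canonical projection $L \to L/L^{(1)}$ restricts to an algebra isomorphism $A_0 \cong L/L^{(1)}$, and the latter is abelian by definition of the derived series; hence $A_0$ is an abelian subalgebra. Next I would repeat this with $L^{(1)}$ in place of $L$: being an $A$-algebra, it splits over $(L^{(1)})^{(1)} = L^{(2)}$, giving $L^{(1)} = L^{(2)} \dot{+} A_1$ with $A_1 \cong L^{(1)}/L^{(2)}$ abelian. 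Continuing inductively, at stage $i$ I split $L^{(i)} = L^{(i+1)} \dot{+} A_i$ with $A_i$ an abelian subalgebra isomorphic to $L^{(i)}/L^{(i+1)}$; the process stops once $L^{(n+1)} = 0$, at which point $L^{(n)} = A_n$ is itself abelian. Substituting each splitting into the previous one then yields
\[
L^{(i)} = A_n \dot{+} A_{n-1} \dot{+} \ldots \dot{+} A_i \quad \text{for every } 0 \leq i \leq n,
\]
which is exactly (ii), and taking $i = 0$ recovers (i).

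The argument is essentially bookkeeping, and I do not expect a serious obstacle, since all of the substantive work sits inside Theorem \ref{t:split}. The only point requiring a little care is to confirm that the complement produced by Theorem \ref{t:split} is genuinely a subalgebra (so that the iteration can legitimately be applied to $L^{(i)}$ at the next stage) and that it is abelian; both follow at once from the remark that a vector space complement to $L^{(i+1)}$ inside $L^{(i)}$ which happens to be a subalgebra must be isomorphic, via the quotient map, to the abelian algebra $L^{(i)}/L^{(i+1)}$. I would also note that the decompositions at successive levels are automatically compatible: each $A_i$ is selected inside $L^{(i)}$ and the sum $L^{(i+1)} \dot{+} A_i$ is direct, so no overlaps can arise when the pieces are finally assembled into the single direct sum for $L$.
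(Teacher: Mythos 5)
Your proof is correct, but it builds the decomposition in the opposite direction to the paper's, and the two constructions are not interchangeable. You apply Theorem \ref{t:split} to each derived term $L^{(i)}$ (legitimate, since subalgebras of Leibniz $A$-algebras are again solvable $A$-algebras) to write $L^{(i)} = L^{(i+1)} \dot{+} A_i$ with $A_i$ abelian; then (ii) holds by construction and (i) is just the case $i=0$. The paper instead splits $L$ over the \emph{last} term, putting $A_n = L^{(n)}$ and $L = A_n \dot{+} B_n$, and then recurses on the complement: $B_n = A_{n-1} \dot{+} B_{n-1}$ with $A_{n-1} = B_n^{(n-1)}$, and so on; there (i) is immediate, while (ii) needs a short induction ($L^{(k-1)} \subseteq L^{(k)} + B_k^{(k-1)} \subseteq L^{(k-1)}$). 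Your route is the cleaner way to the corollary as stated. What the paper's construction buys, however, is that the tails $B_k = A_{k-1} \dot{+} \cdots \dot{+} A_0$ are \emph{subalgebras} of $L$, which your choice of the $A_i$ does not guarantee: with your construction $[A_0,A_1]$ may well have a nonzero component in $L^{(2)}$, so $A_1 + A_0$ need not be closed under multiplication. That extra property is used silently later in the paper; in the proof of Theorem \ref{t:nz}(iii) the sum $A_{k-1} + \cdots + A_0$ is treated as a solvable Leibniz $A$-algebra of derived length $k$, which is automatic from the paper's construction but would require an additional argument (or a different choice of the $A_i$) under yours.
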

\begin{proof} (i) By Theorem \ref{t:split} there is a subalgebra $B_n$ of $L$ such that $L = L^{(n)} \dot{+} B_n$. Put $A_n = L^{(n)}$. Similarly $B_n = A_{n-1} \dot{+} B_{n-1}$ where $A_{n-1} = (B_n)^{(n-1)}$. Continuing in this way we get the claimed result. Note, in particular, that it is apparent from the construction that $A_k \cap (A_{k-1} + ... + A_0) = 0$ for each $1 \leq k \leq n$, and that it is easy to see from this that the sum is a vector space direct sum.
\par
(ii) We have that $L^{(n)} = A_n$. Suppose that $L^{(k)} = A_n \dot{+} \ldots \dot{+} A_k$ for some $1 \leq k \leq n$. Then $L = L^{(k)} \dot{+} B_k$ and $A_{k-1} = B_k^{(k-1)}$ by the construction in (i). But now $L^{(k-1)} \subseteq L^{(k)} + B_k^{(k-1)} \subseteq L^{(k-1)}$, whence $L^{(k-1)} = A_n \dot{+} \ldots \dot{+} A_{k-1}$ and the result follows by induction.
\end{proof}
\medskip

Now we show that the result in Theorem \ref{t:a} (iii)(a) holds when $L$ is solvable without any restrictions on the underlying field. 

\begin{theor}\label{t:int}
Let $L$ be a solvable Leibniz $A$-algebra. Then $Z(L) \cap L^2 = 0$.
\end{theor}
\begin{proof} Let $L$ be a minimal counter-example and let $z \in Z(L) \cap L^2$. Put $Z(L) = U \dot{+} Fz$. Then $U$ is an ideal of $L$ and 
$$ U \neq z + U \in (Z(L) \cap L^2 + U)/U \subseteq Z(L/U) \cap (L/U)^2. $$
The minimality of $L$ implies that $U = 0$, so $Z(L) = Fz$. But now if $K$ is an ideal of $L$ which does not contain $Z(L)$, then $K \neq z + K \in Z(L/K) \cap (L/K)^2$ similarly, contradicting the minimality of $L$. It follows that $L$ is monolithic with monolith $Z(L)$.
\par
Now let $M$ be a maximal ideal of $L$. Then $Z(M) \cap M^2 = 0$ by the minimality of $L$, so $Z(L) \not \subseteq M^2$, whence $M^2 = 0$. It follows that $L = M \dot{+} Fx$ for some $x \in L$ and $M$ is abelian. Let $L = L_0 \dot{+} L_1$ be the Fitting decomposition of $L$ relative to $R_x$. Then $L_1 = \cap_{i=1}^{\infty} R_x^i(L) \subseteq M$, and $[L_1,L_0] \subseteq L_1$, so $L_1$ is a right ideal of $L$. 
\par

Now 
\[ [x,[L,x]]\subseteq [[x,L],x]+[x^2,L]\subseteq [L,x]+[x^2,M+Fx] \subseteq [L,x]
\]
since $x^2\in I\subseteq M$, so $[x^2,M]=0$. Suppose that $[x,R_x^k(L)] \subseteq R_x^k(L)$. Then
\[ [x,R_x^{k+1}(L)]=[x,[R_x^k(L),x]]\subseteq [[x,R_x^k(L)],x]+[x^2,R_x^k(L)]\subseteq R_x^{k+1}(L),
\]
since $R_x^k(L)\subseteq [L,x]=[M+Fx,x]\subseteq M$, whence $[x^2,R_x^k(L)]=0$. It follows that $[L,L_1]=[x,L_1]\subseteq L_1$ and $L_1$ is an ideal of $L$.

If $L_1 \neq 0$ then $Z(L) \subseteq L_1 \cap L_0 = 0$, a contradiction. Hence $L_1 = 0$ and $R_x$ is nilpotent. But then $L =M + Fx$ is nilpotent and hence abelian, and the result follows.
\end{proof}
\medskip

Next we aim to show the relationship between ideals of $L$ and the decomposition given in Corollary \ref{c:decomp}. First we need the following lemma.

\begin{lemma}\label{l:ideal}
Let $L$ be a solvable Leibniz $A$-algebra of derived length $\leq n+1$, and suppose that $L = B \dot{+} C$ where $B = L^{(n)}$ and $C$ is a subalgebra of $L$. If $D$ is an ideal of $L$ then $D = (B \cap D) \dot{+} (C \cap D)$.
\end{lemma}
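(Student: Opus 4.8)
The overall plan is an induction on $\dim L$, the point being to show that the projection of $D$ onto $C$ along $B$ is already contained in $D$. Writing $\pi_B,\pi_C$ for the projections attached to the decomposition $L=B\dot{+}C$, it suffices to prove $\pi_C(D)\subseteq D$: then $\pi_B(d)=d-\pi_C(d)\in D$ for every $d\in D$, and since $B\cap C=0$ this gives $D=(B\cap D)\dot{+}(C\cap D)$. First I would reduce to the case $B\cap D=0$. The subspace $V:=B\cap D$ is an ideal of $L$ lying inside the abelian ideal $B$, so by Lemma \ref{l:fac} the quotient $L/V$ is again a solvable Leibniz $A$-algebra; moreover $(L/V)^{(n)}=B/V$ and $(C+V)/V\cong C$ is a complementary subalgebra, while the image of $D$ meets $B/V$ trivially. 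If $V\neq0$ then $\dim(L/V)<\dim L$, so the inductive hypothesis applies to $L/V$ and yields $D\subseteq C+V$; as $V\subseteq D$ this gives $D=V\dot{+}(C\cap D)=(B\cap D)\dot{+}(C\cap D)$. Hence I may assume $B\cap D=0$ and must prove $D\subseteq C$.

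Under this assumption $[D,B]\subseteq D\cap B=0$ and $[B,D]=0$, so $D$ centralises the abelian ideal $B$; in particular $\widetilde D:=\pi_C(D)$ is an ideal of $C$ centralising $B$. Thus $D$ is the graph of a linear map $\beta\colon\widetilde D\to B$, with $d=u+\beta(u)$ for $u\in\widetilde D$, and proving $D\subseteq C$ is the same as proving $\beta=0$. Projecting the relation $[d,y]\in D$ onto $B$ for $y\in C$ gives $\beta([u,y])=[\beta(u),y]$ for all $u\in\widetilde D$, $y\in C$; and taking $y\in\widetilde D$, where the trivial action of $\widetilde D$ on $B$ makes the right-hand side vanish, shows that $\beta$ vanishes on $\widetilde D^{\,2}$.

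To finish I would exploit a Cartan subalgebra acting nondegenerately on the bottom layer. Put $H:=L^{(n-1)}\cap C$; since $L^{(n-1)}=B\dot{+}H$, Theorem \ref{t:split} (with $i=n-1$) identifies $H$ as a Cartan subalgebra of $L^{(n-1)}$, so $B=L^{(n)}$ is precisely the Fitting one-component of $L^{(n-1)}$ relative to $R_H$; thus $H$ acts on $B$ with no null component. The identity $\beta([u,x])=[\beta(u),x]$ for $x\in H$ shows that $\beta(\widetilde D)$ is an $R_H$-invariant subspace of $B$, hence itself has no $H$-null part, so $\beta(\widetilde D)=\beta([\widetilde D,H])$. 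The goal is then to show $[\widetilde D,H]\subseteq\widetilde D^{\,2}$, since this together with $\beta|_{\widetilde D^{2}}=0$ forces $\beta(\widetilde D)=0$ and hence $D\subseteq C$.

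The main obstacle is exactly establishing $[\widetilde D,H]\subseteq\widetilde D^{\,2}$ (equivalently, squeezing $\beta(\widetilde D)$ into $\beta(\widetilde D^{2})$): it is not automatic, and it is here that the $A$-algebra hypothesis must be used in earnest. I would attack it through the inductive instance of the lemma applied to $C$, which has smaller dimension than $L$ and whose last derived term is $H=C^{(n-1)}$; this pins down how $\widetilde D$ sits across the layers described by Corollary \ref{c:decomp} and should deliver the required bracket containment. The recurring technical difficulty throughout is the absence of antisymmetry: $R_x$ and $L_x$ cannot be controlled simultaneously, so every time one needs to pass from the right action to the left action — for instance to see that a nilpotently acting element generates an abelian, and hence centralising, subalgebra — the passage must be routed through Lemma \ref{l:leftideal}.
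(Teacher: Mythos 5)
Your setup is sound as far as it goes: the reduction to $B\cap D=0$ by passing to $L/(B\cap D)$, the graph map $\beta\colon\widetilde D\to B$, the identity $\beta([u,y])=[\beta(u),y]$ for $y\in C$, the vanishing of $\beta$ on $\widetilde D^{\,2}$, the identification $H=L^{(n-1)}\cap C=C^{(n-1)}$ as a Cartan subalgebra of $L^{(n-1)}$, and the consequence $\beta(\widetilde D)=\beta([\widetilde D,H])$ are all correct. But the whole proof stands or falls on the containment $[\widetilde D,H]\subseteq\widetilde D^{\,2}$, which you explicitly leave open, and the route you sketch for it cannot deliver it. Applying the lemma inductively to $C$ (legitimate, since $\dim C<\dim L$) only gives a splitting $\widetilde D=(\widetilde D\cap H)\,\dot{+}\,(\widetilde D\cap C')$ for some complement $C'$ of $H$ in $C$. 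From this, $[\widetilde D\cap H,H]=0$ because $H$ is abelian, but nothing forces $[\widetilde D\cap C',H]$ into $\widetilde D^{\,2}$: the lemma is a purely linear splitting statement and carries no information about brackets between the pieces, so iterating it cannot produce the bracket containment at precisely the point where, as you yourself observe, the $A$-hypothesis must enter.

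What is in fact true --- and what the paper proves --- is the much stronger statement $[D,H]+[H,D]=0$, i.e.\ $D\subseteq Z_L(H)$; a posteriori this makes your target containment trivial, but it cannot be reached from splittings alone. The paper's mechanism is the following. First reduce also to $D^2=0$; note that your induction on $\dim L$ alone cannot achieve this, because disposing of the case $D^2\neq 0$ requires applying the lemma to the smaller ideal $D^2$ inside the \emph{same} algebra $L$, which is why the paper inducts on $\dim L+\dim D$. Once $D^2=0$ and $B\cap D=0$, the images of $D$ and of $H=C^{(n-1)}$ in $L/B$ are abelian ideals of the Leibniz $A$-algebra $L/B$, hence both lie in its nilradical, which is abelian (Lemma \ref{l:lemm1}(ii)); therefore $[D,H]+[H,D]\subseteq B$, and since $D$ is an ideal this gives $[D,H]+[H,D]\subseteq B\cap D=0$. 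This centraliser step is the missing idea. The finish is then essentially your Fitting argument run on centralisers: $Z_L(H)=Z_B(H)+Z_C(H)$, and $B=[B,H]$ places $B$ in the Fitting one-component of $R_H$, so $Z_B(H)=0$ and $D\subseteq Z_C(H)\subseteq C$, i.e.\ $\beta=0$.
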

\begin{proof} Let $L$ be a counter-example for which dim$L$ + dim$D$ is minimal. Suppose first that $D^2 \neq 0$. Then $D^2 = (B \cap D^2) \dot{+} (C \cap D^2)$ by the minimality of $L$. Moreover, since
$$ L/D^2 = (B + D^2)/D^2 \hspace{.2cm} \dot{+} \hspace{.2cm} (C + D^2)/D^2 $$
we have
$$ D/D^2 = (B \cap D + D^2)/D^2 \hspace{.2cm} \dot{+} \hspace{.2cm} (C \cap D + D^2)/D^2 $$
whence 
$$D = B \cap D + C \cap D + D^2 = B \cap D \dot{+} C \cap D.$$
We therefore have that $D^2 = 0$. Similarly, by considering $L/B \cap D$, we have that $B \cap D = 0$. 
\par
Put $E = C^{(n-1)}$. Then $(D + B)/B$ and $(E + B)/B$ are abelian ideals of the Leibniz $A$-algebra $L/B$, and so 
$$ \left[ \frac{D + B}{B}, \frac{E + B}{B} \right]+ \left[ \frac{E + B}{B}, \frac{D + B}{B} \right] = \frac{B}{B}, $$
by Lemma \ref{l:lemm1} (ii), whence
$$ [D, E]+[E,D] \subseteq [D + B, E + B]+[E+B,D+B] \subseteq B \hbox{ and }$$ $$ [D,E]+[E,D] \subseteq B \cap D = 0;$$ that is, $D \subseteq Z_L(E)$. But $Z_L(E) = Z_B(E) + Z_C(E)$. For, suppose that $x = b + c \in Z_L(E)$, where $b \in B$, $c \in C$. Then $0 = [x,E] = [b,E] + [c,E]$, so $[b,E] = - [c,E] \in B \cap C = 0$. Similarly, $[E,b]=-[E,c]=0$, so that $Z_L(E) \subseteq Z_B(E) + Z_C(E)$. But the reverse inclusion is clear, so equality follows.
\par
Now $L^{(n-1)} \subseteq B + E \subseteq L^{(n-1)}$, so $B = L^{(n)} = (B + E)^2 = [B,E]+[E,B]$. But
$$ [E,B]\subseteq [[E,L^{(n-1)}],L^{(n-1)}]=[[E,B+E],B+E]\subseteq [B,B+E]=[B,E],$$ so $B=[B,E]$.
Let $L^{(n-1)} = L_0 \dot{+} L_1$ be the Fitting decomposition of $L^{(n-1)}$ relative to $R_E$. Then $B \subseteq L_1$ so that $Z_B(E) \subseteq L_0 \cap L_1 = 0$, whence $D\subseteq Z_L(E) = Z_C(E) \subseteq C$ and the result follows.  
\end{proof}

\begin{theor}\label{t:nz}
Let $L$ be a solvable Leibniz $A$-algebra of derived length $n+1$ with nilradical $N$, and let $K$ be an ideal of $L$ and $A$ a minimal ideal of $L$. Then, with the same notation as Corollary \ref{c:decomp}, 
\begin{itemize}
\item[(i)] $K = (K \cap A_n) \dot{+} (K \cap A_{n-1}) \dot{+} \ldots \dot{+} (K \cap A_0)$;
\item[(ii)] $N = A_n \oplus (N \cap A_{n-1}) \oplus \ldots \oplus (N \cap A_0)$;
\item[(iii)] $Z(L^{(i)}) = N \cap A_i$ for each $0 \leq i \leq n$; and
\item[(iv)] $A \subseteq N \cap A_i$ for some $0 \leq i \leq n$. 
\end{itemize} 
\end{theor}
\begin{proof} (i) We have that $L = A_n \dot{+} B_n$ where $A_n = L^{(n)}$ from the proof of Corollary \ref{c:decomp}. It follows from Lemma \ref{l:ideal} that $K = (K \cap A_n) + (K \cap B_n)$. But now $K \cap B_n$ is an ideal of $B_n$ and $B_n = A_{n-1} \dot{+} B_{n-1}$. Applying Lemma \ref{l:ideal} again gives $K \cap B_n = (K \cap A_{n-1}) \dot{+} (K \cap B_{n-1})$. Continuing in this way gives the required result.
\par
(ii) This is clear from (i), since $A_n = L^{(n)} = N \cap A_n$.
\par
(iii) We have that $L^{(i)} = L^{(i+1)} \dot{+} A_i$ from Corollary \ref{c:decomp}, and also that $Z(L^{(i)}) \cap L^{(i+1)} = 0$ from Theorem \ref{t:int}. Thus, using Lemma \ref{l:ideal},
$$ Z(L^{(i)}) = (Z(L^{(i)}) \cap L^{(i+1)}) + (Z(L^{(i)}) \cap A_i) = Z(L^{(i)}) \cap A_i \subseteq N \cap A_i. $$
It remains to show that $N \cap A_i \subseteq Z(L^{(i)})$; that is, $[N \cap A_i,L^{(i)}]+[L^{(i)},N\cap A_i] = 0$. 
We use induction on the derived length of $L$. If $L$ has derived length one the result is clear. So suppose it holds for Leibniz algebras of derived length $\leq k$, and let $L$ have derived length $k+1$. Then $B = A_{k-1} + \dots + A_0$ is a solvable Leibniz $A$-algebra of derived length $k$, and, if $N$ is the nilradical of $L$, then $N \cap A_i$ is inside the nilradical of $B$ for each $0 \leq i \leq k-1$, so $[N \cap A_i, B^{(i)}]+[B^{(i)},N\cap A_i] = 0$ for $0 \leq i \leq k-1$, by the inductive hypothesis. But $[N \cap A_i, A_k] = [N \cap A_i,L^{(k)}] \subseteq [N,N] = 0$, for $0 \leq i \leq k$, whence $[N \cap A_i,L^{(i)}] = [N \cap A_i,A_k + B^{(i)}] = 0$ for $0 \leq i \leq k$. Similarly, $[L^{(i)},N\cap A_i]=0$.
\par
(iv) We have $A \subseteq L^{(i)}$, $A \not \subseteq L^{(i+1)}$ for some $0 \leq i \leq n$. Now $[L^{(i)}, A] \subseteq [L^{(i)}, L^{(i)}] = L^{(i+1)}$, so $[L^{(i)}, A] \neq A$. It follows that $[L^{(i)}, A] = 0$. Similarly, $[A,L^{(i)}]=0$, whence $A \subseteq Z(L^{(i)}) = N \cap A_i$, by (ii).
\end{proof}
\medskip

The final result in this section shows when two ideals of a Leibniz $A$-algebra centralise each other.

\begin{propo}\label{p:cent}
Let $L$ be a Leibniz $A$-algebra and let $B, D$ be ideals of $L$. Then $B \subseteq Z_L(D)$ if and only if $B \cap D \subseteq Z(B) \cap Z(D)$.
\end{propo}
\begin{proof} Suppose first that $B \subseteq Z_L(D)$. Then $[B \cap D,D]+[D,B\cap D] = 0 = [B \cap D,B]+[B,B\cap D]$, whence $B \cap D \subseteq Z(B) \cap Z(D)$.
\par
Conversely, suppose that $B \cap D \subseteq Z(B) \cap Z(D)$. Then $[B,D]+[D,B] \subseteq B \cap D \subseteq Z(B + D)$ which yields that $[B,D]+[D,B] \subseteq (B + D)^2 \cap Z(B + D) = 0$, by Theorem \ref{t:int}. Hence $B \subseteq Z_L(D)$.
\end{proof}

\section{Completely solvable Leibniz $A$-algebras}
\medskip
A Leibniz algebra $L$ is called {\it completely solvable} if $L^2$ is nilpotent. Over a field of characteristic zero every solvable Leibniz algebra is completely solvable. Clearly completely solvable Leibniz $A$-algebras are metabelian so we would expect stronger results to hold for this class of algebras. First the decomposition theorem takes on a simpler form.

\begin{theor}\label{t:ss}
Let $L$ be a completely solvable Leibniz $A$-algebra with nilradical $N$. Then $L = L^2 \dot{+} B$, where $L^2$ is abelian and $B$ is an abelian subalgebra of $L$, and $N = L^2 \oplus Z(L)$.
\end{theor}
\medskip
{\it Proof.} We have that $L = L^2 \dot{+} B$, where $B$ is an abelian subalgebra of $L$, by Theorem \ref{t:split}. Also, $L^2$ is nilpotent and so abelian. Moreover, $N = L^2 + N \cap B$ and $N \cap B = Z(L)$, by Theorem \ref{t:nz}.
\medskip

Next we see that the minimal ideals are easy to locate.

\begin{theor}\label{t:min}
Let $L = L^2 \dot{+} B$ be a completely solvable Leibniz $A$-algebra and let $A$ be a minimal ideal of $L$. Then 
\begin{itemize}
\item[(i)] $A \subseteq L^2$ or $A \subseteq B$;
\item[(ii)] $A \subseteq B$ if and only if $A \subseteq Z(L)$ (in which case dim $A = 1$); and
\item[(iii)] $A \subseteq L^2$ if and only if $[A,L] = A$.
\end{itemize} 
\end{theor}
\begin{proof} (i) and (ii) follow from Theorem \ref{t:nz} (iii) and (iv).
\par
(iii) Suppose that $A \subseteq L^2$. Then $[A,L]+[L,A] \neq 0$ from (ii), so $[A,L]+[L,A] = A$. But $[L,A]=0$ or $[x,a]=-[a,x]$ for all $x\in L, a\in A$, by \cite[Lemma 1.9]{barnes}. Hence $[A,L]=A]$.
\par

The converse is clear.
\end{proof}

\begin{coro}\label{c:phi}
Let $L$ be a completely solvable Leibniz $A$-algebra. Then $L$ is $\phi$-free if and only if $L^2 \subseteq $ Asoc$L$.
\end{coro}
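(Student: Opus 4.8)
The plan is to reduce the Frattini condition to a complementation statement about minimal ideals, and then, on the $L^2$-part, to translate that statement into complete reducibility of $L^2$ as an $L$-module.

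First I would record the standard Frattini reduction. Since $\phi(L)$ is an ideal and every nonzero ideal of a finite-dimensional algebra contains a minimal ideal, $\phi(L)=0$ if and only if no minimal ideal lies in $\phi(L)$. For a minimal ideal $A$ the latter is equivalent to $A$ being complemented by a maximal subalgebra: if $A\not\subseteq M$ for some maximal $M$ then $L=A+M$, and $A\cap M$ is an ideal of $L$ lying strictly inside $A$ (here one uses that $A$ is abelian, being a minimal ideal of a solvable algebra), so $A\cap M=0$. Thus $\phi(L)=0$ iff every minimal ideal of $L$ is complemented. By Theorem \ref{t:min} every minimal ideal lies in $L^2$ or in $Z(L)$. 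A minimal ideal $Fz\subseteq Z(L)$ is always complemented: since $Z(L)\cap L^2=0$ by Theorem \ref{t:int}, one may choose a hyperplane $M\supseteq L^2$ with $z\notin M$, and $[M,M]\subseteq L^2\subseteq M$ shows $M$ is a maximal subalgebra. Moreover $Z(L)\subseteq$ Asoc$(L)$ automatically and Asoc$(L)\subseteq N=L^2\oplus Z(L)$ by Theorem \ref{t:ss}, so the whole statement reduces to the minimal ideals contained in $L^2$.

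The crux is the following equivalence for a minimal ideal $A\subseteq L^2$: $A$ is complemented by a subalgebra of $L$ if and only if $A$ is a direct summand of $L^2$ regarded as an $L$-module (equivalently, $L^2=A\oplus V$ with $V$ an ideal of $L$). For the forward direction, given a subalgebra $M$ with $L=A\dot{+}M$, I would check that $L^2\cap M$ is an $L$-submodule: writing an arbitrary element of $L$ as $\alpha+m$ with $\alpha\in A$, $m\in M$, both $[\alpha,x]$ and $[x,\alpha]$ vanish for $x\in L^2\cap M$ because $A$ and $L^2$ are abelian (Theorem \ref{t:ss}), while $[m,x],[x,m]\in L^2\cap M$ since $M$ is a subalgebra and $L^2$ an ideal; hence $L^2=A\oplus(L^2\cap M)$. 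Conversely, from $L^2=A\oplus V$ with $V$ an ideal, the subspace $V\dot{+}B$ is a subalgebra (using $[V,V]\subseteq[L^2,L^2]=0$ together with $V$ being an ideal and $B$ abelian) and complements $A$ in $L$. This is precisely the step where the metabelian $A$-algebra structure is essential, and it is the one I expect to be the \emph{main obstacle}: it is exactly the lack of anti-symmetry that must be controlled when passing between one-sided and two-sided complements.

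Finally I would assemble the pieces. Since $L^2$ is abelian, its $L$-submodules are exactly the ideals of $L$ contained in $L^2$, so the previous steps give $\phi(L)=0$ iff every simple $L$-submodule of $L^2$ is a direct summand. It then remains to invoke the module-theoretic fact that a finite-dimensional module in which every simple submodule is a direct summand is completely reducible; I would prove this by induction on dimension, splitting off one simple submodule $T$ (so $L^2=T\oplus V_1$) and using the Dedekind modular law to see that every simple submodule of $V_1$ is again a direct summand of $V_1$. Consequently $\phi(L)=0$ iff $L^2$ is completely reducible, i.e. $L^2=\mathrm{soc}(L^2)$, which (using $Z(L)\cap L^2=0$) is precisely the condition $L^2\subseteq$ Asoc$(L)$; the reverse implication, that complete reducibility of $L^2$ makes each of its minimal ideals, and hence every minimal ideal of $L$, complemented, runs back through the same equivalences.
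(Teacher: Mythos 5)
Your proof is correct, but it follows a genuinely different route from the paper's. The paper's own proof is essentially two citations: if $L$ is $\phi$-free then $N = \mathrm{Asoc}\,L$ by \cite[Theorem 2.4]{stit}, and $L^2 \subseteq N$ since $L$ is completely solvable; conversely, if $L^2 \subseteq \mathrm{Asoc}\,L$ then $L$ splits over $\mathrm{Asoc}\,L$ by Theorem \ref{t:split}, whence $L$ is $\phi$-free by \cite[Proposition 3.1]{stit}. You never invoke \cite{stit}: instead you re-prove, in this special setting, precisely the fragment of Frattini theory that those citations supply. Your reduction of $\phi$-freeness to complementation of minimal ideals is sound (the key point, that $A \cap M$ is a two-sided ideal when $A$ is a minimal --- hence abelian --- ideal and $M$ a maximal subalgebra, is verified correctly); the hyperplane argument for central minimal ideals works because $Z(L) \cap L^2 = 0$ (Theorem \ref{t:int}) and any codimension-one subspace containing $L^2$ is a maximal subalgebra; and your ``crux'' equivalence --- a minimal ideal $A \subseteq L^2$ is complemented by a subalgebra iff it is an $L$-module direct summand of $L^2$ --- is exactly right, with the abelian-ness of $L^2$ and of $B$ (Theorem \ref{t:ss}) doing the work of taming the one-sided products in both directions. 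The concluding semisimplicity argument (every simple submodule a direct summand implies complete reducibility, by induction and the modular law) is standard and correct, though your parenthetical appeal to $Z(L)\cap L^2=0$ at the very end is unnecessary: $\mathrm{Asoc}\,L$ is a sum of irreducible $L$-modules, hence a semisimple module, so $L^2 \subseteq \mathrm{Asoc}\,L$ forces $L^2$ to be semisimple with no reference to the centre. What your approach buys is self-containedness --- in effect a proof of the needed cases of \cite[Theorem 2.4 and Proposition 3.1]{stit}, making explicit where the failure of anti-symmetry must be controlled --- at the cost of length compared with the paper's two-line, citation-based argument.
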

\begin{proof} Suppose first that $L$ is $\phi$-free. Then $L^2 \subseteq N = $ Asoc$L$, by \cite[Theorem 2.4]{stit}. 
\par
So suppose now that $L^2 \subseteq $ Asoc$L$. Then $L$ splits over Asoc$L$ by Theorem \ref{t:split}. But now $L$ is $\phi$-free by \cite[Proposition 3.1]{stit}.
\end{proof}
\medskip

Finally we can identify the maximal nilpotent subalgebras of $L$. First we need the following lemma.

\begin{lemma}\label{l:maxn}
Let $L$ be a metabelian Leibniz algebra, and let $U$ be a maximal nilpotent subalgebra of $L$. Then $U \cap L^2$ is an abelian ideal of $L$ and $L^2 = (U \cap L^2) \oplus K$ where $K$ is an ideal of $L$ and $[K,U] = K$.
\end{lemma}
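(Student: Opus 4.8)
The plan is to analyse $L^2$ as a module for $U$ acting by right multiplication and to read both assertions off the resulting Fitting decomposition, the decisive simplification being that $L^2$ is abelian. First I would record the identities forced by metabelianness: for $x,y \in L$ one has $[x,y] \in L^2$, and right multiplication by any element of $L^2$ annihilates $L^2$ (because $[L^2,L^2]=0$). Since $[R_a,R_b]=R_{[b,a]}$, this means that, restricted to the abelian ideal $L^2$, the operators $R_u$ $(u\in U)$ commute with every $R_x$ $(x\in L)$ and form a nilpotent family. I would therefore take the Fitting decomposition $L^2 = V_0 \oplus K$ relative to $\{R_u : u \in U\}$, with $V_0$ the null component (every $R_u$ acting nilpotently) and $K$ the sum of the nonzero weight spaces; by construction $K$ is $R_U$-invariant and $[K,U]=K$.

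Next I would identify $U \cap L^2$ with $V_0$. The inclusion $U \cap L^2 \subseteq V_0$ is immediate, since for $v \in U\cap L^2$ the element $R_u^{k}(v)=[[\dots[v,u],\dots],u]$ is a product of $k+1$ elements of the nilpotent algebra $U$ and hence vanishes for large $k$. For the reverse inclusion I would use maximality: it suffices to prove that $S := U + V_0$ is a nilpotent subalgebra, for then $S=U$ and $V_0 \subseteq U \cap L^2$. The subalgebra property is the first real obstacle. We have $[V_0,U]\subseteq V_0$ by invariance, but $[U,V_0]$ must also be controlled, and here the lack of antisymmetry intervenes: writing $[u,v] = -[v,u] + s(u,v)$ where $s(u,v)=(u+v)^2-u^2-v^2 \in \mathrm{Leib}(L)$, the term $[v,u]=R_u(v)$ lies in $V_0$, so everything reduces to showing the symmetric correction $s(u,v)$ lies in $V_0$. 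This I would obtain by the computation $R_{u'}^{n}\bigl(s(u,v)\bigr) = s\bigl(u, R_{u'}^{n}(v)\bigr)$ for $u'\in U$ (the cross terms vanish once an argument sits in $L^2$), which is $0$ for large $n$ because $v\in V_0$; hence every $R_{u'}$ is nilpotent on $s(u,v)$ and so $s(u,v)\in V_0$. Granting $S$ is a subalgebra, $V_0$ is an ideal of $S$ with $S/V_0 \cong U/(U\cap L^2)$ nilpotent and $R_U$ acting nilpotently on $V_0$, so $S$ is nilpotent by the Engel criterion for Leibniz algebras.

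It then remains to show that both summands are genuine two-sided ideals of $L$. For $K$ I would bootstrap from $[K,U]=K$: writing $k=[k',u]$ and expanding $[[k',u],x]$ and $[x,[k',u]]$ by the Leibniz identity, the mixed terms fall into $[L^2,L^2]=0$, leaving $[k,x],[x,k]\in R_U$ applied to $[K,x]$ respectively $[x,K]$, which forces these sets into $\bigcap_n R_U^{\,n}(L^2)=K$. For $W:=U\cap L^2=V_0$ I would exploit that $R_u$ and $R_x$ commute on $L^2$: from $R_{u'}^{n}(R_x w)=R_x(R_{u'}^{n}w)=0$ for large $n$ one gets $[W,L]\subseteq V_0$, and the same symmetric-correction argument as above gives $[L,W]\subseteq V_0$.

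The hard part throughout is precisely the failure of antisymmetry, which replaces each $[u,v]$ by $-[v,u]$ plus a $\mathrm{Leib}(L)$-correction; this is exactly the difficulty that Lemma \ref{l:leftideal} is designed to absorb, and the vanishing of mixed products in the metabelian algebra is what makes all the correction terms land back in $V_0$. Once the subalgebra property of $U+V_0$ and the two-sided ideal property of the Fitting summands are secured, the statement follows: $U\cap L^2=V_0$ is an abelian ideal and $L^2=(U\cap L^2)\oplus K$ with $K$ an ideal satisfying $[K,U]=K$.
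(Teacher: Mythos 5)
Your proof is correct, and its skeleton is the same as the paper's: a Fitting decomposition relative to right multiplication by $U$, maximality of $U$ (via nilpotency of $U+V_0$) to identify the null component with $U\cap L^2$, and the one-component as $K$. But the execution differs in ways worth recording. You decompose $L^2$ directly, using the observation that $[R_a,R_b]=R_{[b,a]}$ vanishes on $L^2$, so all right multiplications commute there, and you then reuse this commutativity to prove that $U\cap L^2$ and $K$ are two-sided ideals of $L$; the paper instead decomposes all of $L$ relative to $R_U$ and intersects with $L^2$. Your symmetric-correction device $s(u,v)=(u+v)^2-u^2-v^2$ plays exactly the role of the paper's induction $L_U^k(L_0\cap L^2)\subseteq R_U^{k-1}(L_0\cap L^2)$ modelled on Lemma \ref{l:leftideal}. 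Most notably, your bootstrap $[K,x]\subseteq[[K,x],U]$ and $[x,K]\subseteq[[x,K],U]$, iterated into $\bigcap_n R_U^{\,n}(L^2)=K$, is actually sounder than the paper's corresponding step: the paper asserts $[L^2,U]=[L_1,U]=L_1$, which fails for a general metabelian Leibniz algebra (take $L$ nilpotent metabelian with $[L^2,L]\neq 0$, e.g.\ the filiform algebra with $[x,y]=z$, $[x,z]=w$; then $U=L$, $L_1=0$, yet $[L^2,U]\neq 0$); that equality needs $[U\cap L^2,U]=0$, which holds in the $A$-algebra applications but not in the stated generality, and your iteration repairs precisely this. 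One small patch on your side: ``$S/V_0$ nilpotent plus $R_U$ nilpotent on $V_0$'' does not by itself force nilpotency of $S$, but your ingredients do suffice, since $S^2\subseteq V_0$ and $R_{u+v}$ restricts on $V_0$ to $R_u$, so every $R_s$ ($s\in S$) is nilpotent on $S$ and the Leibniz Engel theorem (Barnes) applies.
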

\begin{proof} Let $L = L_0 \dot{+} L_1$ be the Fitting decomposition of $L$ relative to $R_U$. Then $L_1 = \cap_{i=1}^{\infty} L({\rm ad}\,U)^i \subseteq L^2$, and so $L^2 = (L_0 \cap L^2)\dot{+} L_1$. Now 
$$[L,L_0 \cap L^2] = [L_0 + L_1,L_0 \cap L^2] \subseteq (L_0 \cap L^2) + L^{(2)} = L_0 \cap L^2.$$ Similarly, $[L_0\cap L^2,L]\subseteq L_0\cap L^2$ 
so $L_0 \cap L^2$ is an ideal of $L$. Also, $U^2\subseteq L_0\cap L^2$ and an induction argument similar to that in Lemma \ref{l:leftideal} shows that $L_U^k(L_0\cap L^2)\subseteq R_U^{k-1}(L_0\cap L^2)$ for $k\geq 1$. It follows that $U + (L_0 \cap L^2)$ is a nilpotent subalgebra of $L$, and so $L_0 \cap L^2 \subseteq U \cap L^2$. The reverse inclusion is clear. 
\par

Next, $[L^2,L_1]\subseteq L^{(2)}=0$, so $[L^2,U]=[L_1,U]=L_1$. But now, 
\[[L_0,L_1]\subseteq [L_0,[L^2,U]]\subseteq [[L_0,L^2],U]+[[L_0,U],L^2]\subseteq [L^2,U]=L_1,
\] so $L_1$ is an ideal of $L$. Hence we can put $K=L^2$.
\end{proof} 

\begin{theor}\label{t:maxn}
Let $L$ be a completely solvable Leibniz $A$-algebra, and let $U$ be a maximal nilpotent subalgebra of $L$. Then $U = (U \cap L^2) \oplus (U \cap C)$ where $C$ is a Cartan subalgebra of $L$.
\end{theor}
\begin{proof} Put $U = (U \cap L^2) \oplus D$, so $D$ is an abelian subalgebra of $L$. Let $L = L_0 \dot{+} L_1$ be the Fitting decomposition of $L$ relative to $R_D$. As in Lemma \ref{l:maxn}, $L_1$ is an abelian right ideal of $L$. 
\par
Now put $L^2 = (U \cap L^2) \oplus K$ as given by Lemma \ref{l:maxn}. Then 
$$K = [K,U] = [K,D] \hbox{ so } K \subseteq L_1 \hbox{ and } U \cap L^2 \subseteq L_0 \cap L^2.$$ 
Hence 
$$L_0^2\subseteq L_0 \cap L^2 = (U \cap L^2) + (L_0 \cap K) = U \cap L^2,$$ since $L_0\cap K\subseteq L_0\cap L_1=0$.
\par 
Next put $L_0 = L_0^2 \dot{+} E$ where $E$ is an abelian subalgebra of $L_0$. Then 
$$U = L_0 \cap U = L_0^2 \oplus (E \cap U) = (U \cap L^2) \oplus (E \cap U). \hspace{1cm} (*)$$
\par
Finally put $E = (E \cap L^2) \oplus C$ where $E \cap U \subseteq C$. Then 
$$L = L_1 + L_0 = L^2 + L_0 = L^2 + E = L^2 \dot{+} C $$ 
so $C$ is a Cartan subalgebra of $L$, by Theorem \ref{t:split}. Moreover, $E \cap U \subseteq C \cap U$, so (*) implies that 
$$C \cap U = (E \cap U) \oplus (C \cap U \cap L^2) = E \cap U,$$ 
since $C \cap L^2 = 0$. But now (*) becomes $U = (U \cap L^2) \oplus (U \cap C)$ where $C$ is a Cartan subalgebra of $L$, as claimed.   
\end{proof}

\section{Monolithic solvable Leibniz $A$-algebras}
Monolithic Lie algebras play a part in the application of Lie $A$-algebras to the study of residually finite varieties, so it seems worthwhile to investigate whether the extra properties they have are inherited by their Leibniz counterparts

\begin{theor}\label{t:mon}
Let $L$ be a monolithic solvable Leibniz $A$-algebra of derived length $n+1$ with monolith $W$. Then, with the same notation as Corollary \ref{c:decomp}, 
\begin{itemize}
\item[(i)] $W$ is abelian;
\item[(ii)] $Z(L) = 0$ and either $[L,W] = W$ or $[W,L]=W$; 
\item[(iii)] $N = A_n = L^{(n)}$; 
\item[(iv)] $N = Z_L(W)$; and
\item[(v)] $L$ is $\phi$-free if and only if $W = N$.
\end{itemize} 
\end{theor}
\begin{proof} (i) Clearly $W \subseteq L^{(n)}$, which is abelian.
\par
(ii) If $Z(L) \neq 0$ then $W \subseteq Z(L) \cap L^2 = 0$, by Theorem \ref{t:int}, a contradiction. Hence $Z(L) = 0$. It follows from this that $[L,W]+[W,L] \neq 0$. But $[L,W]$ is an ideal of $L$, so either $[L,W] = W$ or $[L,W]=0$, in which case $[W,L]=0$..
\par
(iii) We have $N = A_n \oplus N \cap A_{n-1} \oplus \ldots \oplus N \cap A_0$ by Theorem \ref{t:nz}(i). Moreover, $N \cap A_i$ is an ideal of $L$ for each $0 \leq i \leq n-1$, by Theorem \ref{t:nz}(iii). But if $N \cap A_i \neq 0$ then $W \subseteq A_n \cap N \cap A_i = 0$ if $i \neq n$. This contradiction yields the result.
\par
(iv) We have that $L = N \dot{+} B$ for some subalgebra $B$ of $L$, by Theorem \ref{t:split} and (iii). Put $C = Z_L(W)$ and note that $N \subseteq C$. Suppose that $N \neq C$. Then $C = N \dot{+} B \cap C$. Choose $A/N$ to be a minimal ideal of $L/N$, so that $A^2\subseteq N$. Pick $x\in A\setminus N$ and let $L = L_0 \dot{+} L_1$ be the Fitting decomposition of $L$ relative to $R_x$. Then 
$$L_1 = \bigcap_{i=1}^{\infty} R_x^i(L) \subseteq [[L,A],A] \subseteq [A,A] \subseteq N,$$ 
which is abelian. Hence $N=L_1\dot{+}N\cap L_0$. Now $N\cap L_0$ is an ideal of $L$, since $[L_1,N\cap L_0]+[N\cap L_0,L_1]\subseteq N^2=0$ and it is clearly invariant under $L_0$. Moreover, $Fx+N\cap L_0$ is a nilpotent subalgebra of $L$, since $x^2\in$ Leib$(L)\subseteq N$, $x^2\in L_0$ and using Lemma \ref{l:leftideal}. Hence it is abelian, and so $[N\cap L_0,x]=0$ and $[N,x]=[L_1,x]=L_1$. It follows that $L_1=R_x^k(N)$ for all $k\geq 1$. But now, a straightforward induction proof shows that $[L_0,L_1]\subseteq [L_0,R_x^k(N)]\subseteq L_1+[R_x^k(L_0),N]$ for all $k\geq 1$. Since $R_x^k(L_0)=0$ for some $k$ this yields that $[L_0,L_1]\subseteq L_1$. Thus $L_1$ is an abelian ideal of $L$, and so $L_1=0$, as, otherwise, $W\subseteq L_1\cap L_0=0$.This yields that $Fx+N$ is nilpotent and thus abelian, whence $A \subseteq Z_L(N) \subseteq N$, by Lemma \ref{l:nilrad}. This contradiction implies that $N = C$.
\par
(v) Clearly $W =$ Asoc$L$. Suppose first that $L$ is $\phi$-free. Then $W =$ Asoc$L = N$, by \cite[Theorem 7.4]{frat}. So suppose now that Asoc$L = W = N$. Then $L$ splits over Asoc$L$ by Theorem \ref{t:split} and (iii). But now $L$ is $\phi$-free by \cite[Theorem 7.3]{frat}.
\end{proof}
\medskip

It is shown in \cite{tow} that monolithic solvable Lie $A$-algebras are not necessarily metabelian. However, when a Leibniz $A$-algebra is strongly solvable the situation is more straightforward.

\begin{theor}\label{t:ssmon}
Let $L$ be a monolithic strongly solvable Leibniz $A$-algebra. Then the maximal nilpotent subalgebras of $L$ are $L^2$ and the Cartan subalgebras of $L$ (that is, the subalgebras that are complementary to $L^2$.) 
\end{theor}
\begin{proof} Let $U$ be a maximal nilpotent subalgebra of $L$ and let $W$ be the monolith of $L$. Then $L^2 = (U \cap L^2) \oplus K$ where $U \cap L^2, K$ are ideals of $L$ and $[U,K] = K$, by Lemma \ref{l:maxn}. Either $W \subseteq U \cap L^2$ and $K = 0$ or else $W \subseteq K$ and $U \cap L^2 = 0$.
\par
In the former case $N = L^2 \subseteq U$, by Theorem \ref{t:mon}. But then $U \subseteq Z_L(N) \subseteq N$, by Lemma \ref{l:nilrad}, so $U = L^2$. In the latter case $U$ is a Cartan subalgebra of $L$, by Theorem \ref{t:maxn}. 
\end{proof}
\medskip

Finally we give necessary and sufficient conditions for a monolithic algebra to be a strongly solvable Leibniz $A$-algebra. 

\begin{lemma}\label{l:aa}
Let $L = L^2 \dot{+} B$ be a metabelian Leibniz algebra, where $B$ is a subalgebra of $L$, and suppose that $[L^2,b] = L^2$ for all $b \in B$. Then $L$ is a strongly solvable $A$-algebra.
\end{lemma}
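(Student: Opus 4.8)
The plan is to dispose of ``strongly solvable'' immediately and then concentrate on the $A$-algebra property. Since $L$ is metabelian, $L^{(2)} = [L^2, L^2] = 0$, so $L^2$ is abelian and in particular nilpotent; hence $L$ is (completely, and so) strongly solvable. The substance of the lemma is therefore the claim that every nilpotent subalgebra $U$ of $L$ is abelian.

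I would first record two preliminary facts. Because $L^2 = [L,L]$ is an ideal complemented by the subalgebra $B$, the projection $\pi : L \to B$ along $L^2$ is an algebra homomorphism with kernel $L^2$; as $L/L^2$ is abelian, so is $B$. Next, for any $u \in L$, writing $u = m + \pi(u)$ with $m \in L^2$, and any $x \in L^2$, the abelianness of $L^2$ gives $[x,u] = [x,m] + [x,\pi(u)] = [x,\pi(u)]$, so $R_u$ and $R_{\pi(u)}$ agree on $L^2$.

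Now let $U$ be a nilpotent subalgebra and set $V = U \cap L^2$. Expanding $[u,v]$ for $u,v \in U$ and using that both $L^2$ and $B$ are abelian shows $U^2 \subseteq L^2$, hence $U^2 \subseteq V$; in particular $[V,u] \subseteq U^2 \subseteq V$, so $V$ is an $R_u$-invariant subspace of $L^2$ for each $u \in U$. The argument then splits. If $U \subseteq L^2$, then $U$ is a subalgebra of the abelian algebra $L^2$ and is abelian. Otherwise choose $u \in U$ with $\pi(u) \neq 0$. Since $U$ is nilpotent, every sufficiently long product in $U$ vanishes, so $R_u$ acts nilpotently on $U$ and hence $R_u|_V$ is nilpotent. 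But $R_u|_V = R_{\pi(u)}|_V$ by the second preliminary fact, and the hypothesis $[L^2,\pi(u)] = L^2$ says $R_{\pi(u)}$ is a surjective, hence bijective, operator on the finite-dimensional space $L^2$; its restriction to the invariant subspace $V$ is therefore invertible as well. An operator that is simultaneously nilpotent and invertible forces $V = 0$, so $U^2 \subseteq V = 0$ and $U$ is abelian. In either case $U$ is abelian, so $L$ is an $A$-algebra.

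I expect the heart of the argument, and its only delicate point, to be exactly this clash in the second case: nilpotency of $R_u|_V$ comes from $U$ being nilpotent, while invertibility of $R_{\pi(u)}|_V$ comes from the standing hypothesis on $B$, and the two are reconciled through the identity $R_u|_{L^2} = R_{\pi(u)}|_{L^2}$. The one technical care needed is to verify that $V$ is genuinely $R_{\pi(u)}$-invariant before restricting---this follows because $R_{\pi(u)}|_V = R_u|_V$ carries $V$ into $U^2 \subseteq V$---so that an invertible operator does restrict to an invertible operator on $V$.
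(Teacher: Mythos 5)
Your proof is correct, and it reaches the conclusion by a genuinely more self-contained route than the paper does. The paper restricts attention to \emph{maximal} nilpotent subalgebras $U$ and leans on Lemma~\ref{l:maxn}: the Fitting decomposition of $L$ relative to $R_U$ gives $L^2 = (U \cap L^2) \oplus K$ with $K$ an ideal of $L$ and $[K,U] = K$; the hypothesis then yields $R_u^i(L^2) = L^2$ for all $i$, which forces $L^2 = K$, hence $U^2 \subseteq U \cap L^2 = 0$. You avoid all of that machinery: you take an \emph{arbitrary} nilpotent subalgebra $U$, observe that $V = U \cap L^2$ is $R_u$-invariant with $R_u|_V$ nilpotent (from nilpotency of $U$) and simultaneously invertible (from $R_u|_{L^2} = R_{\pi(u)}|_{L^2}$ and the hypothesis), and conclude $V = 0$ directly. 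The underlying clash --- surjectivity of $R_u$ on $L^2$ against nilpotency of $U$ --- is the same in both proofs, but your version buys three things: it needs neither Lemma~\ref{l:maxn} nor the reduction to maximal nilpotent subalgebras; it is pure linear algebra once the two preliminary facts are recorded; and it treats the case $U \subseteq L^2$ explicitly, a case the paper's proof glosses over (the paper writes ``let $u = x + b \in U$'' and applies the hypothesis to $b$ without ensuring $b \neq 0$ --- note the hypothesis, read literally at $b=0$, would force $L^2=0$, so it must be meant for nonzero $b$, and your choice of $u$ with $\pi(u) \neq 0$ is exactly the care needed there). Both proofs, of course, dispose of strong solvability the same trivial way, since metabelian means $L^2$ is abelian.
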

\begin{proof} Let $U$ be a maximal nilpotent subalgebra of $L$. We have $L^2 = (U \cap L^2) \oplus K$ where $K$ is an ideal of $L$ and $[U,K] = K$, by Lemma \ref{l:maxn}. Let $u = x + b \in U$, where $x \in L^2$, $b \in B$. Then $L^2 = [L^2,b] = [L^2,u]$, so $L^2 = R_u^i(L^2)$ for all $i \geq 1$. It follows that $L^2 = K$ from which $U^2 \subseteq U \cap L^2 = 0$ and $L$ is an $A$-algebra.
\end{proof}

\begin{theor}\label{l:mona}
Let $L$ be a monolithic Leibniz algebra. Then $L$ is a strongly solvable $A$-algebra if and only if $L = L^2 \dot{+} B$ is metabelian, where $B$ is a subalgebra of $L$ and $[L^2,b] = L^2$ for all $b \in B$ (or, equivalently, $R_b$ acts invertibly on $L^2$).
\end{theor}
\begin{proof} Suppose first that $L$ is a strongly solvable $A$-algebra. Then $L = L^2 \dot{+} B$ is metabelian, where $B$ is a subalgebra of $L$, by Theorem \ref{t:split}. Let $b \in B$ and let $L = L_0 \dot{+} L_1$ be the Fitting decomposition of $L$ relative to $R_b$. It is easy to see, as in Lemma \ref{l:maxn}, that $L^2 = (L^2 \cap L_0) \dot{+} L_1$ and $L^2 \cap L_0$ and $L_1$ are ideals of $L$, so $L^2 = L^2 \cap L_0$ or $L^2 = L_1$ as $L$ is monolithic. The former implies that $[L^2,b] = 0$. But then 
\[ [b,[b,L^2]]\subseteq [b^2,L^2]+[[b,L^2],b]\subseteq [L^2,b]=0,
\] so $L^2+Fb$ is a nilpotent subalgebra of $L$ and hence is abelian. This yields that $L^2$ and $Fb$ are ideals of $L$, which is impossible. It follows that $L^2 = L_1$, whence $[L^2,b] = L^2$. If $\theta = R_b|_{L^2}$ then $L^2 = {\rm Ker}\,\theta \dot{+} {\rm Im}\,\theta$, so ${\rm Ker}\,\theta = \{0\}$ and $\theta$ is invertible.
\par
The converse follows from Lemma \ref{l:aa}.
\end{proof}

\section{Cyclic Leibniz algebras}
{\it Cyclic} Leibniz algebras, $L$, are generated by a single element. In this case $L$ has a basis $a,a^2, \ldots, a^n (n > 1)$ and product $[a^n,a]=\alpha_2a^2+ \ldots + \alpha_na^n$.  Let $T$ be the matrix for $R_a$ with respect to the above basis. Then $T$ is the
companion matrix for $p(x) =  x^n - \alpha_n x^{n-1} - \ldots - \alpha_2 x= p_1(x)^{n_1} \ldots p_r(x)^{n_r}$, where the
$p_j$ are the distinct irreducible factors of $p(x)$. Then we have the following result.

\begin{theor} $L$ is a cyclic Leibniz $A$-algebra if and only if $\alpha_2 \neq 0$, and then $L=L^2\dot{+} F(a^n-\alpha_n a^{n-1} - \dots - \alpha_2 a)$ and we can take $p_1(x)^{n_1}=x$. \end{theor}
\begin{proof} If $\alpha_2=0$ we have that $L$ is nilpotent but not abelian, so $L$ is not an $A$-algebra. If $\alpha_2\neq 0$, it is easy to check that $Fb=F(a^n-\alpha_n a^{n-1} - \dots - \alpha_2 a)$ is a subalgebra of $L$ which complements $L^2$, and $[L^2,b]=L^2$. It follows from Lemma \ref{l:aa} that $L$ is an $A$-algebra. Moreover, $p(x)$ is divisible by $x$ only once.
\end{proof}

\begin{theor}\label{t:cyclicmono} The cyclic Leibniz $A$-algebra $L$ is monolithic if and only if $p(x)$ has exactly two irreducible factors (one of which is $x$).
\end{theor}
\begin{proof} This follows easily from \cite[Corollary 4.5]{bat}.
\end{proof}

\begin{coro}The cyclic Leibniz $A$-algebra $L$ is monolithic and $\phi$-free if and only if $p(x)=xp_2(x)$ \end{coro}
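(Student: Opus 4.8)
The plan is to read off both conditions from the factorisation of $p(x)$ by combining Theorem \ref{t:cyclicmono} with Corollary \ref{c:phi}. Throughout I would regard $L^2 = \langle a^2, \ldots, a^n\rangle$ as a module for the right multiplication $R_a$. Since $a^2$ generates it cyclically and $R_a$ restricted to $L^2$ is the companion matrix of $q(x) := p(x)/x = x^{n-1} - \alpha_n x^{n-2} - \ldots - \alpha_2$, we have $L^2 \cong F[x]/(q(x))$ as an $F[x]$-module, which the Chinese Remainder Theorem splits as $\bigoplus_{j=2}^{r} F[x]/(p_j(x)^{n_j})$. Recall from the previous theorem that, $L$ being an $A$-algebra, $\alpha_2 \neq 0$, the decomposition $L = L^2 \dot{+} Fb$ is metabelian (so $L^2$ is abelian and $L$ is completely solvable), and $x$ divides $p(x)$ exactly once, so that $p_1(x) = x$.

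Next I would dispose of the monolithic condition by quoting Theorem \ref{t:cyclicmono}: $L$ is monolithic precisely when $p(x)$ has exactly two irreducible factors, one of which is $x$; that is, $p(x) = x\,p_2(x)^{n_2}$ with $p_2$ irreducible and $p_2 \neq x$, in which case $L^2 \cong F[x]/(p_2(x)^{n_2})$. Under this hypothesis $L$ is a monolithic solvable $A$-algebra, so by Theorem \ref{t:mon} its monolith $W$ is abelian and satisfies $W \subseteq L^{(1)} = L^2$. As $W$ is the unique minimal ideal and every minimal ideal of a solvable algebra is abelian, the abelian socle is simply $\mathrm{Asoc}\,L = W$.

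It then remains to translate $\phi$-freeness via Corollary \ref{c:phi}, which applies since $L$ is completely solvable: $L$ is $\phi$-free if and only if $L^2 \subseteq \mathrm{Asoc}\,L = W$. Because $W \subseteq L^2$ always holds, this is equivalent to $L^2 = W$, i.e.\ to $L^2$ being itself a minimal ideal of $L$. Now $L^2 \cong F[x]/(p_2(x)^{n_2})$ has a unique minimal submodule, of dimension $\deg p_2$, whereas $\dim L^2 = n_2 \deg p_2$; hence $L^2$ is a minimal ideal exactly when $n_2 = 1$, and conversely $n_2 = 1$ makes $L^2 \cong F[x]/(p_2)$ irreducible. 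Thus, given monolithicity, $\phi$-freeness is equivalent to $n_2 = 1$, and combining the two conditions yields that $L$ is monolithic and $\phi$-free if and only if $p(x) = x\,p_2(x)$. For the converse direction one simply notes that $p(x) = x\,p_2(x)$ already forces exactly two irreducible factors (monolithicity) and $n_2 = 1$ ($\phi$-freeness). The step needing the most care is the identification of the ideals of $L$ contained in $L^2$ with the $R_a$-submodules of $L^2$, so that "$L^2$ is a minimal ideal" really is "$L^2$ is an irreducible module"; for this I would appeal to the description of the ideal structure of cyclic Leibniz algebras recorded in \cite{bat} that already underlies Theorem \ref{t:cyclicmono}.
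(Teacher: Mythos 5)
Your proof is correct, but it takes a genuinely different route from the paper's, which disposes of the corollary in a single line: Theorem \ref{t:cyclicmono} for the monolithic condition (exactly as you do) and then a citation of \cite[Corollary 4.2]{bat} for the $\phi$-free condition, i.e.\ an external, already-known criterion for $\phi$-freeness of cyclic Leibniz algebras in terms of the factorisation of $p(x)$. You instead re-derive that criterion internally: Corollary \ref{c:phi} (applicable because $L^2=\mathrm{Leib}(L)$ is abelian, so $L$ is completely solvable) reduces $\phi$-freeness to $L^2\subseteq \mathrm{Asoc}\,L$; Theorem \ref{t:mon} gives $\mathrm{Asoc}\,L=W\subseteq L^2$ in the monolithic case, so the condition becomes $L^2=W$, i.e.\ $L^2$ is a minimal ideal; and the $F[x]$-module identification $L^2\cong F[x]/(q(x))$ with $q(x)=p(x)/x=p_2(x)^{n_2}\cdots p_r(x)^{n_r}$ translates this, via the Chinese Remainder Theorem, into $r=2$ and $n_2=1$, i.e.\ $p(x)=x\,p_2(x)$. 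What each approach buys: the paper's citation is shorter and leans on the established theory of cyclic Leibniz algebras; yours is self-contained within the paper, exposes the structural reason ($\phi$-freeness, given monolithicity, is precisely $L^2=W$), and uses only the paper's own machinery plus elementary module theory. One remark: the step you flag as needing care, namely that the ideals of $L$ contained in $L^2$ are exactly the $R_a$-invariant subspaces, does not actually require an appeal to \cite{bat}: since $L^2=\mathrm{Leib}(L)$, any subspace $K\subseteq L^2$ satisfies $[L,K]=0$ and $[K,a^i]=0$ for $i\geq 2$, so $K$ is an ideal of $L$ if and only if $[K,a]=R_a(K)\subseteq K$, which closes that gap directly.
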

\begin{proof} Theorem \ref{t:cyclicmono} and \cite[Corollary 4.2]{bat}.
\end{proof}

\begin{coro}If the underlying field is algebraically closed, then the  cyclic Leibniz $A$-algebra $L$ is monolithic and $\phi$-free if and only if it is two dimensional with $[a^2,a]=a^2$.\end{coro}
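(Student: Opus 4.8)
The plan is to deduce this corollary directly from the preceding corollary, which characterises the monolithic and $\phi$-free cyclic Leibniz $A$-algebras as exactly those with $p(x) = x p_2(x)$ for an irreducible polynomial $p_2$. The only genuinely new ingredient is that algebraic closure severely restricts the shape of $p_2$. First I would apply that preceding corollary to reduce to the case $p(x) = x p_2(x)$ with $p_2$ irreducible; since $p_1 = x$ and $p_2$ are distinct irreducible factors, we have $p_2 \neq x$. Over an algebraically closed field the only irreducible polynomials are linear, so $p_2(x) = x - \lambda$ with $\lambda \neq 0$, and therefore $p(x) = x(x-\lambda)$ has degree $2$. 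Because $\deg p = n = \dim L$ (the basis being $a, a^2, \ldots, a^n$), this forces $\dim L = 2$.

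Next I would identify the multiplication explicitly. In dimension $2$ the defining relation is $[a^2,a] = \alpha_2 a^2$, and matching $p(x) = x^2 - \alpha_2 x$ against $x(x-\lambda)$ gives $\alpha_2 = \lambda \neq 0$. To normalise the structure constant to $1$, I would rescale the generator by setting $b = \mu a$ for a nonzero scalar $\mu$: an easy induction using $b^{k+1} = [b^k,b]$ shows $b^k = \mu^k a^k$, and substituting into the relation shows the constant transforms as $\alpha_2 \mapsto \mu\,\alpha_2$ in dimension $2$. Choosing $\mu = \alpha_2^{-1}$ then yields $[b^2,b] = b^2$, which is the asserted normalised form $[a^2,a] = a^2$.

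For the converse I would simply observe that a two-dimensional cyclic Leibniz algebra with $[a^2,a] = a^2$ has $p(x) = x^2 - x = x(x-1)$, a product of two distinct linear, hence irreducible, factors; the preceding corollary then immediately gives that $L$ is monolithic and $\phi$-free. The only step that requires any care is the rescaling computation used to normalise $\alpha_2$ to $1$; everything else is a direct translation through the preceding corollary together with the fact that over an algebraically closed field irreducibility is the same as linearity.
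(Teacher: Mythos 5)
Your proof is correct and follows essentially the same route as the paper: reduce to $p(x) = x\,p_2(x)$ via the preceding corollary, use algebraic closure to force $p_2$ to be linear (and distinct from $x$), hence $\deg p = \dim L = 2$, then rescale the generator. In fact your normalisation is the more careful one: since $b = \mu a$ gives $b^2 = \mu^2 a^2$ and $[b^2,b] = \mu\alpha_2\, b^2$, the correct choice is $\mu = 1/\alpha_2$ as you say, whereas the paper's stated replacement of $a$ by $(1/\sqrt{\alpha_2})a$ would yield $[b^2,b] = \sqrt{\alpha_2}\, b^2$ and appears to be a slip.
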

\begin{proof} Clearly $p(x)$ is quadratic, so $L$ is two dimensional, and replacing $a$ by $(1/\sqrt{\alpha_2})a$ gives the claimed multiplication.
\end{proof}

\section{Solvable Leibniz $A$-algebras over an algebraically closed field}
The following result was proved for Lie algebras by Drensky in \cite{dren}. 

\begin{theor}\label{t:dren} Let $L$ be a solvable Leibniz $A$-algebra over an algebraically closed field $F$. Then the derived length of $L$ is at most $3$.
\end{theor}
\medskip
\begin{proof} First note that we can assume that the ground field is of characteristic $p > 0$, since otherwise $L$ is strongly solvable and so of derived length at most $2$. Suppose that $L$ is a minimal counter-example, so the derived length of $L$ is four. 
\par
Let $A$ be a minimal ideal of $L$ contained in Leib$(L)$, and put $N=L^{(2)}$. We have that $L^{(3)}=A$.  Put $\bar{L} = L/$Leib$(L)$ and for each $x \in L$ write $\bar{x} = x +$ Leib$(L)$. Then $A$ is an irreducible right $\bar{L}$-module, and hence an irreducible right $U$-module, where $U$ is the universal enveloping algebra of $\bar{L}$. Let $\phi$ be the corresponding representation of $U$ and let $\bar{x} \in \bar{L}$, $n \in N$. Then $[[\bar{x},\bar{n}],\bar{n}] = \bar{0}$, whence $[\bar{x},\bar{n}^p] = 0$ and so $\bar{n}^p \in Z = Z(U)$. 
\par
Let $n_1, n_2 \in N$. Then $\bar{n}_1^p, \bar{n}_2^p \in Z$, so $\alpha_1 \bar{n}_1^p + \alpha_2 \bar{n}_2^p \in \hbox{ker}(\phi)$, for some $\alpha_1, \alpha_2 \in F$, since dim $\phi(Z) \leq 1$, by Schur's Lemma. Since $F$ is algebraically closed, there are $\beta_1, \beta_2 \in F$ such that $\alpha_1 = \beta_1^p, \alpha_2 = \beta_2^p$, so $(\beta_1 \bar{n}_1 + \beta_2 \bar{n}_2)^p = \beta_1^p \bar{n}_1^p + \beta_2^p \bar{n}_2^p \in \hbox{ker}(\phi)$, since $[\bar{n}_1,\bar{n}_2] = \bar{0}$. It follows from this together with Lemma \ref{l:leftideal} that $A + F(\beta_1 n_1 + \beta_2 n_2)$ is a nilpotent subalgebra of $L$ and hence abelian. Thus $\beta_1 \bar{n}_1 + \beta_2 \bar{n}_2 \in \hbox{ker}(\phi)$ and so dim $\phi(\bar{N}) \leq 1$. Hence $Z_N(A)$ has codimension at most $1$ in $N$.

Then $\dim N/Z_N(A)) \leq 1$.
Suppose that $\dim N/Z_N(A)) = 1$. Put $S = L/Z_N(A)$. Then dim$(S^{(2)}) = 1$. It follows that $S/Z_L(S^{(2)}) \subseteq R_S(S^{(2)})$ and so has dimension at most one, giving $[S^{(1)},S^{(2)}]+[S^{(2)},S^{(1)}] = 0$. But now $S^{(1)}$ is nilpotent but not abelian. As $S$ must be an $A$-algebra, this is a contradiction. We therefore have that dim $(L^{(2)}/Z_{L^{(2)}}(A)) = 0$, whence $[A,L^{(2)}] = 0$.
\par
Now we can include $L^{(3)}$ in a chief series for $L$. So let $0 = A_0 \subset A_1 \subset \ldots \subset A_r = L^{(3)}$ be a chain of ideals of $L$ each maximal in the next. By the above we have $[A_i,L^{(2)}] \subseteq A_{i-1}$ for each $1 \leq i \leq r$. It follows that $L^{(2)}$ is a nilpotent subalgebra of $L$ and hence abelian. We infer that $L^{(3)} = 0$, a contradiction. The result follows.
\end{proof}

\end{document}